\newtheorem{thm}{Theorem}
\newtheorem{lem}[thm]{Lemma}
\newtheorem{cor}[thm]{Corollary}
\newtheorem{property}[thm]{Property}
\newtheorem{prob}[thm]{Problem}
\begin{document}

\title{Fuglede-Putnam theorem for locally measurable operators}

\author{A. Ber}
\address{National University of Uzbekistan, Tashkent, Uzbekistan}
\email{aber1960@mail.ru}
\author{V. Chilin}
\address{National University of Uzbekistan, Tashkent, Uzbekistan}
\email{chilin@ucd.uz}
\author{F. Sukochev}
\address{School of Mathematics and Statistics, University of New South Wales, Kensington, 2052, Australia}
\email{f.sukochev@unsw.edu.au}
\author{D. Zanin}
\address{School of Mathematics and Statistics, University of New South Wales, Kensington, 2052, Australia}
\email{d.zanin@unsw.edu.au}

\begin{abstract}
We extend the Fuglede-Putnam theorem from the algebra $B(H)$ of all bounded operators on the Hilbert space $H$ to the algebra of all locally measurable operators affiliated with a von Neumann algebra.
\end{abstract}

\keywords{Fuglede-Putnam theorem, von Neumann algebra, locally measurable operator.}
\subjclass[2010]{Primary 46L10, 47C15, 47B15; Secondary 46L35, 46L89}

\maketitle

\section{Introduction}

The (first part of the) following problem was suggested by von Neumann (see pp. 60-61, Appendix 3 in \cite{N}).

\begin{prob}\label{von neumann problem} Let $a,b,c\in B(H).$ If $a$ is normal and if $ac=ca,$ does it follow that $a^*c=ca^*?$ More generally, if $a$ and $b$ are normal and if $ac=cb,$ does it follow that $a^*c=cb^*?$
\end{prob}

If the operators $a$ and $c$ belong to a finite factor $\mathcal{M},$ then the first part of the problem was resolved (in the affirmative) by von Neumann himself. In full generality, a problem was resolved by Fuglede \cite{F}.

Furthermore, von Neumann mentioned that a \lq\lq formal\rq\rq \ analogue of Problem \ref{von neumann problem} for unbounded operators can be {\it non-rigorously} answered in the negative due to the fact that a product of $2$ unbounded operators does not always exists. A partial affirmative answer was given by Putnam (see Theorem 1.6.2 in \cite{P}). He proved that if $cb\subset ac,$ then $cb^*\subset a^*c$ provided that $c$ is {\it bounded}.

In what follows, we propose a rigorous analogue of the Problem \ref{von neumann problem} for unbounded operators affiliated with a von Neumann algebra $\mathcal{M}.$ We start with a proper framework.

The set of all operators affiliated to a von Neumann algebra $\mathcal{M}$ does not necessarily form an algebra. At the same time, the class of unital $\ast$-algebras\footnote{The operations in these algebras are strong sum, strong product, the scalar multiplication and the usual adjoint of operators. For precise definitions, see Section 2.} which consist of operators affiliated with $\mathcal{M}$ is vast. In particular, it contains all algebras of measurable operators \cite{Seg}, and those of $\tau$-measurable operators \cite{Ne}.

According to \cite{CZ}, in this class, there is a unique maximal element called $LS(\mathcal{M}).$ We call $LS(\mathcal{M})$ the algebra of all locally measurable operators affiliated with $\mathcal{M}.$ An equivalent constructive definition of $LS(\mathcal{M})$ is given in Section \ref{prelims}.

We now properly restate Problem \ref{von neumann problem} for unbounded operators affiliated with $\mathcal{M}.$

\begin{prob}\label{our problem} Let $\mathcal{M}$ be a von Neumann algebra and let $a,b,c\in LS(\mathcal{M}).$ If $a$ and $b$ are normal and if $ac=cb,$ does it follow that $a^*c=cb^*?$
\end{prob}

Theorem 5 in \cite{B} delivers the positive answer to Problem \ref{our problem} for the case when $a,$ $b$ and $c$ are measurable operators affiliated with a von Neumann algebra $\mathcal{M}$ of type I (see also \cite{ACM}). In the case of an arbitrary finite von Neumann algebra $\mathcal{M},$ the Problem \ref{our problem} is resolved in the affirmative in \cite{HSWY} (see Corollary 3.6 there).

We answer Problem \ref{our problem} in the affirmative in full generality. Our methods are stronger than those of \cite{ACM}, \cite{B}, \cite{F}, \cite{HSWY}, \cite{P} and are of independent interest.

The following theorem is the main result of the paper.

\begin{thm}\label{t3}
Let $\mathcal{M}$ be an arbitrary von Neumann algebra, let  $a,b,c$ be locally measurable operators affiliated with $\mathcal{M}.$ If $a$ and $b$ are normal and if $ac=cb,$ then $a^*c=cb^*.$
\end{thm}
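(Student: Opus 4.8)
The plan is to reduce the statement to a version in which the two normal operators are \emph{bounded}, and then to run a Rosenblum-type analytic argument inside $LS(\mathcal{M})$.

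\textbf{Step 1 (spectral truncation).} Since $a$ and $b$ are normal and affiliated with $\mathcal{M}$, the spectral projections $e_n:=\chi_{[0,n]}(|a|)$ and $g_m:=\chi_{[0,m]}(|b|)$ lie in $\mathcal{M}$, commute with $a,a^*$ and with $b,b^*$ respectively, and satisfy $e_n\uparrow 1$, $g_m\uparrow 1$ in the local measure topology $t(\mathcal{M})$. The compressions $\alpha_n:=ae_n$ and $\beta_m:=bg_m$ are bounded normal elements of $\mathcal{M}$. Multiplying $ac=cb$ on the left by $e_n$ and on the right by $g_m$, and using that $e_n,g_m$ commute with $a,b$, I would obtain $\alpha_n\,c_{nm}=c_{nm}\,\beta_m$, where $c_{nm}:=e_ncg_m\in LS(\mathcal{M})$. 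This transfers the problem to intertwiners whose \emph{outer} operators are bounded, at the cost of leaving the middle operator only locally measurable.

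\textbf{Step 2 (the bounded-intertwiner lemma).} The heart of the matter becomes the claim: if $\alpha,\beta\in\mathcal{M}$ are bounded and normal, $c'\in LS(\mathcal{M})$, and $\alpha c'=c'\beta$, then $\alpha^*c'=c'\beta^*$. Because $\alpha,\beta$ are bounded, the exponential series for $e^{\lambda\alpha^*}$ and $e^{\lambda\beta^*}$ converge in $\mathcal{M}$, and the skew-adjoint exponents produce unitaries $u(\lambda):=e^{\lambda\alpha^*-\bar\lambda\alpha}$, $w(\lambda):=e^{\lambda\beta^*-\bar\lambda\beta}$ in $\mathcal{M}$. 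From $\alpha c'=c'\beta$ one gets $\alpha^kc'=c'\beta^k$ by associativity in $LS(\mathcal{M})$, and then, using the joint continuity of multiplication in $t(\mathcal{M})$ together with the norm convergence of the exponential series, the identity $e^{\bar\lambda\alpha}c'=c'e^{\bar\lambda\beta}$, i.e.\ $e^{-\bar\lambda\alpha}c'e^{\bar\lambda\beta}=c'$. Substituting this into $f(\lambda):=e^{\lambda\alpha^*}c'e^{-\lambda\beta^*}$ and splitting the exponentials (legitimate since $\alpha^*,\alpha$ and $\beta^*,\beta$ commute) yields the Rosenblum identity $f(\lambda)=u(\lambda)\,c'\,w(\lambda)^*$.

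\textbf{Step 3 (Liouville in $LS(\mathcal{M})$ --- the main obstacle).} The map $f$ is entire as an $LS(\mathcal{M})$-valued function, and by the Rosenblum identity its range lies in the unitary orbit $\{u c' w:u,w\text{ unitary in }\mathcal{M}\}$. Since $t(\mathcal{M})$ is invariant under left and right multiplication by unitaries, this orbit is a $t(\mathcal{M})$-bounded set. If I can show that every entire $LS(\mathcal{M})$-valued function with $t(\mathcal{M})$-bounded range is constant, then $f(\lambda)\equiv f(0)=c'$, whence $e^{\lambda\alpha^*}c'=c'e^{\lambda\beta^*}$ for all $\lambda$, and differentiation at $\lambda=0$ gives $\alpha^*c'=c'\beta^*$. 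This Liouville step is where I expect the genuine difficulty, because $(LS(\mathcal{M}),t(\mathcal{M}))$ is a complete topological $*$-algebra but is \emph{not} locally convex in general (already $L^0$ of a measure space fails to be), so one cannot simply compose $f$ with continuous linear functionals and invoke the scalar Liouville theorem. I would attempt to circumvent this by exploiting the rigid structure of the range --- for instance that $f(\lambda)^*f(\lambda)=w(\lambda)\,c'^*c'\,w(\lambda)^*$ is a fixed unitary conjugate of $c'^*c'$ --- and by reducing, via the structure theory of $\mathcal{M}$ and compression to finite or $\sigma$-finite pieces (where a trace and the result of \cite{HSWY} are available), to settings carrying a separating family of $t(\mathcal{M})$-continuous functionals on which a Liouville principle can be applied and then patched.

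\textbf{Step 4 (passage to the limit).} Granting Step 2, apply it to $(\alpha_n,\beta_m,c_{nm})$ to obtain $\alpha_n^*c_{nm}=c_{nm}\beta_m^*$, which unpacks (again using that $e_n,g_m$ commute with $a^*,b^*$) to $e_n\,a^*c\,g_m=e_n\,c\,b^*\,g_m$ for all $n,m$. Letting $n,m\to\infty$ and using $e_n\uparrow 1$, $g_m\uparrow 1$ together with the joint $t(\mathcal{M})$-continuity of multiplication yields $a^*c=cb^*$, as required.
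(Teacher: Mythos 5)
There is a genuine gap, and it sits exactly where you flagged it: Step 3. The Rosenblum identity $f(\lambda)=u(\lambda)\,c'\,w(\lambda)^*$ is fine, but the Liouville principle you need --- every entire $LS(\mathcal{M})$-valued function with $t(\mathcal{M})$-bounded range is constant --- is not available, and your proposed fallback cannot work. Already for $\mathcal{M}=L^\infty(0,1)$ with its integral trace, $S(\mathcal{M},\tau)=L^0(0,1)$ has \emph{trivial} topological dual, so in the first nontrivial case of the theorem (finite diffuse algebras, the very setting of \cite{HSWY}) there is no separating family of $t(\mathcal{M})$-continuous functionals anywhere, and nothing to ``patch''. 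Worse, Liouville-type theorems genuinely fail in non-locally convex $F$-spaces: there exist non-constant entire functions with relatively compact range in quasi-Banach spaces (Kalton), so the mere $t(\mathcal{M})$-boundedness of the unitary orbit $\{uc'w\}$, or even the rigidity $f(\lambda)^*f(\lambda)=w(\lambda)c'^*c'w(\lambda)^*$, is not by itself enough to force constancy. This obstruction is precisely why the paper abandons the analytic route altogether and argues tracially: for $0\leq c\in S(\mathcal{M},\tau)$ it exploits the finite-trace spectral projection $p_2=e_{(\lambda,+\infty)}(c)$ and trace identities (Lemmas \ref{l1}, \ref{l2}) to prove $a$ and $b$ commute with $p_2$, reduces the $c_1$-part to the classical Fuglede--Putnam theorem (Lemma \ref{l3}), removes the positivity assumption via polar decomposition together with comparison of $n(c)$ and $n(c^*)$ and a central splitting from Sakai's comparison theorem (Lemma \ref{ber last lemma} and the first half of the proof of Theorem \ref{t2}), and then climbs from $S(\mathcal{M},\tau)$ to $S(\mathcal{M})$ to $LS(\mathcal{M})$ by central partitions of unity (Lemmas \ref{l5}, \ref{central partition}, \ref{semifinite measurable}, \ref{semifinite locally measurable}), treating the purely infinite summand separately (Lemma \ref{purely infinite}). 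Your Steps 1, 2 and 4 would indeed reduce the theorem to the bounded-intertwiner lemma, but that lemma is the whole theorem in disguise, and the Liouville mechanism you propose for it is structurally blocked, not merely technically incomplete.

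A secondary, repairable error: in Steps 1 and 4 you assert $e_n\uparrow\mathbf{1}$ and $g_m\uparrow\mathbf{1}$ \emph{in the topology} $t(\mathcal{M})$ and pass to the limit by continuity of multiplication. Monotone convergence of projections to $\mathbf{1}$ does not imply $t(\mathcal{M})$-convergence: for $\mathcal{M}=B(H)$ the local measure topology is the norm topology, and finite-rank projections increasing to $\mathbf{1}$ do not converge in norm. (The analogous limit in the paper's proof of Theorem \ref{t2} is legitimate only because there $a,b\in S(\mathcal{M},\tau)$ forces $\tau(\mathbf{1}-q_n)\to0$, hence genuine convergence in measure --- exactly the hypothesis a locally measurable $a$ does not supply.) The fix is topology-free: from $e_n(a^*c-cb^*)g_m=0$ for all $n,m$, a support argument in the style of the paper's Lemma \ref{l0} (first kill the right support over $m$, then the left over $n$) yields $a^*c=cb^*$. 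But this repair does not touch the essential gap in Step 3.
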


The corollary below extends the classical spectral theorem for normal operator (see e.g. \cite[Ch. 13, Theorem 13.33]{R}) to the setting of locally measurable operators.

\begin{cor}\label{t5}
Let $\mathcal{M}$ be an arbitrary von Neumann algebra and let $a,b$ be locally measurable operators affiliated with $\mathcal{M}.$ If $a$ is normal and $ab=ba$ then  $eb=be$ for every spectral projection $e$ of the operator $a$. If $a$ and $b$ are normal, then the following conditions are equivalent:
\begin{enumerate}[{\rm (a)}]
\item\label{cora} $ab=ba$;
\item\label{corb} $ef=fe$ for every spectral projection $e$ of the operator $a$ and for every spectral projection $f$ of the operator $b;$
\item\label{corc} $\phi(a)\psi(b)=\psi(b)\phi(a)$ for every Borel complex  functions $\phi$ and $\psi$ on $\mathbb C$, which bounded on compact subsets.
\end{enumerate}
\end{cor}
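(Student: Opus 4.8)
The plan is to derive the entire corollary from Theorem \ref{t3} together with a single commutation lemma whose proof exploits the fact that $LS(\mathcal{M})$ is a genuine $\ast$-algebra, so that resolvents can be manipulated algebraically. First I would feed $ab=ba$ into Theorem \ref{t3} with the single normal operator $a$ playing the role of \emph{both} normal operators and with $c=b$: the hypothesis $a\cdot b=b\cdot a$ then yields $a^{*}b=ba^{*}$, and note that $b$ need not be normal for this step. Thus $b$ commutes with both $a$ and $a^{*}$. In the situation of part~(2), where $b$ is also normal, I would additionally apply Theorem \ref{t3} with $b$ in both normal slots and $c=a$, and take adjoints of the resulting identities; this shows that each of $a,a^{*}$ commutes with each of $b,b^{*}$, i.e.\ the $\ast$-subalgebras generated by $a$ and by $b$ mutually commute.

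The key lemma I would isolate is the following: if $h\in LS(\mathcal{M})$ is self-adjoint and $c\in LS(\mathcal{M})$ satisfies $ch=hc$, then $c$ commutes with every spectral projection of $h$. The decisive point is that, since $LS(\mathcal{M})$ is an algebra and $h+\lambda$ is invertible in $LS(\mathcal{M})$ with $(h+\lambda)^{-1}\in\mathcal{M}$ for $\lambda>0$, the purely algebraic identity
\[
c(h+\lambda)^{-1}-(h+\lambda)^{-1}c=-(h+\lambda)^{-1}\bigl(ch-hc\bigr)(h+\lambda)^{-1}=0
\]
is valid in $LS(\mathcal{M})$, so $c$ commutes with every resolvent $(h+\lambda)^{-1}$. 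Since each bounded continuous function of $h$ vanishing at infinity is a norm-limit of polynomials in a resolvent, and since every spectral projection $e_{h}(B)$ is a strong limit of a uniformly bounded net $g_{n}(h)$ of such functions, it remains to pass commutation through these limits. For this I would invoke the local measure topology $t(\mathcal{M})$ on $LS(\mathcal{M})$: multiplication is jointly $t(\mathcal{M})$-continuous, and uniformly bounded strongly convergent nets in $\mathcal{M}$ converge in $t(\mathcal{M})$, whence $c\,e_{h}(B)=\lim_{n} c\,g_{n}(h)=\lim_{n} g_{n}(h)\,c=e_{h}(B)\,c$. I expect this passage to the limit to be the main obstacle, since it is precisely here that pure algebra no longer suffices and one must use the topological structure of $LS(\mathcal{M})$.

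For a normal operator $a$ I would write $a=\mathrm{Re}\,a+i\,\mathrm{Im}\,a$ with commuting self-adjoint parts $\mathrm{Re}\,a,\mathrm{Im}\,a\in LS(\mathcal{M})$; by the first step $b$ commutes with both parts, and every spectral projection of $a$ lies in the von Neumann algebra generated by the joint spectral projections of $\mathrm{Re}\,a$ and $\mathrm{Im}\,a$, so two applications of the lemma give $eb=be$. This proves the first assertion and, applying the same reasoning symmetrically to $b$, the implication \eqref{cora}$\Rightarrow$\eqref{corb}. For \eqref{corb}$\Rightarrow$\eqref{corc} I would approximate $\phi(a)=\int\phi\,de_{a}$ and $\psi(b)=\int\psi\,de_{b}$ by simple functions, i.e.\ by finite linear combinations of spectral projections (the hypothesis that $\phi,\psi$ are bounded on compact sets guarantees that these functional calculi are well defined in $LS(\mathcal{M})$), observe that such combinations commute by \eqref{corb}, and pass to the limit in $t(\mathcal{M})$ using joint continuity of multiplication. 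Finally \eqref{corc}$\Rightarrow$\eqref{cora} is immediate on taking $\phi=\psi=\mathrm{id}$, for which $\phi(a)=a$ and $\psi(b)=b$.
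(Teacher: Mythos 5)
Your opening and closing steps match the paper: deriving $a^*b=ba^*$ from Theorem \ref{t3} with $a$ in both normal slots and $c=b$ is exactly how the paper's Lemma \ref{pervoe} begins, and (c)$\Rightarrow$(a) via $\phi=\psi=\mathrm{id}$ is the paper's argument verbatim. But the proof of your key lemma has a genuine gap, and it sits precisely where you predicted. The topological fact you invoke --- that uniformly bounded strongly convergent nets in $\mathcal{M}$ converge in the local measure topology $t(\mathcal{M})$ --- is false unless $\mathcal{M}$ is finite. If $\mathcal{M}=B(H)$ or a type III factor, then $LS(\mathcal{M})=\mathcal{M}$ and $t(\mathcal{M})$ coincides with the operator norm topology; the rank-one projections $p_n$ onto the vectors of an orthonormal basis converge to $0$ strongly and are uniformly bounded, yet $\|p_n\|=1$, so they do not converge in $t(\mathcal{M})$. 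Consequently you cannot conclude $g_n(h)\to e_h(B)$ in $t(\mathcal{M})$ (in norm, spectral projections are simply not limits of continuous functions of $h$), and the commutation of the \emph{unbounded} $c$ does not pass through the limit. Note that the finite case is exactly the one already known from \cite{HSWY}, so your argument breaks down on the new cases the corollary is about. The same false principle undermines your (b)$\Rightarrow$(c), since $\phi(a)q_n\to\phi(a)$ in $t(\mathcal{M})$ fails for the same reason. A smaller slip: for self-adjoint $h$ the operator $h+\lambda$ with real $\lambda>0$ need not be invertible ($-\lambda$ may lie in the spectrum of $h$); you must use non-real points, e.g. $(h+i\mathbf{1})^{-1}\in\mathcal{M}$.

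The repair, which is what the paper actually does in Lemma \ref{pervoe}, is to apply your resolvent trick to the \emph{commuting} operator rather than to the normal one: write $b=b_1+ib_2$ with $b_j$ self-adjoint, get $ab_j=b_ja$ from Theorem \ref{t3}, and set $c_j=(b_j+i\mathbf{1})^{-1}\in\mathcal{M}$; your algebraic identity then gives $ac_j=c_ja$ with $c_j$ \emph{bounded}, so the classical Hilbert-space theorem \cite[Theorem 13.33]{R} applies to the pair consisting of the unbounded normal $a$ and the bounded $c_j$, yielding $ec_j=c_je$ for every spectral projection $e$ of $a$. All strong limits are then taken inside that classical theorem, where a fixed bounded operator passes through them harmlessly, and no statement about $t(\mathcal{M})$ is needed; one recovers $eb_j=b_je$ algebraically from $eb_j=(b_j+i\mathbf{1})c_je(b_j+i\mathbf{1})-ie$. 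This also makes your decomposition of $a$ into real and imaginary parts (and the reconstruction of the spectral projections of $a$ from the joint ones, which would face the same limit obstacle) unnecessary. Likewise, for (b)$\Rightarrow$(c) the paper replaces your $t(\mathcal{M})$-limit passage by a purely algebraic device: using the truncations $\phi_n(aq_n)=\phi(a)q_n$ and $\psi_m(br_m)=\psi(b)r_m$ it proves $q_nr_m\,(\phi(a)\psi(b)-\psi(b)\phi(a))\,q_nr_m=0$, and then applies Lemma \ref{l0} (if $p_nxp_n=0$ and $p_n\uparrow\mathbf{1}$, then $x=0$) twice, which requires no topology on $LS(\mathcal{M})$ at all.
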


\section{Preliminaries}\label{prelims}

Let $H$ be a Hilbert space, let $B(H)$ be the $*$-algebra of all
bounded linear operators on $H$, and let $\mathbf{1}$ be the
identity operator on $H$. Given a von Neumann algebra
$\mathcal{M}$ acting on $H$, denote by $\mathcal{Z}(\mathcal{M})$
the centre of $\mathcal{M}$ and by
$\mathcal{P}(\mathcal{M})=\{p\in\mathcal{M}:\ p=p^2=p^*\}$ the
lattice of all projections in $\mathcal{M}$. Let
$\mathcal{P}_{fin}(\mathcal{M})$ be the set of all finite projections in
$\mathcal{M}$.

A linear operator $a:\mathfrak{D}\left( a\right) \rightarrow
H $, where the domain $\mathfrak{D}\left( a\right) $ of $a$ is a linear
subspace of $H$, is said to be {\it affiliated} with $\mathcal{M}$ if $ba \subseteq
ab$ for all $b$ from the commutant $\mathcal{M}^{\prime }$ of algebra $\mathcal{M}$.

A densely-defined closed linear operator $a$ (possibly unbounded)
affiliated with $\mathcal{M}$  is said to be \emph{measurable}
with respect to $\mathcal{M}$ if there exists a sequence
$\{p_n\}_{n=1}^\infty\subset \mathcal{P}(\mathcal{M})$ such that
$p_n\uparrow \mathbf{1},\ p_n(H)\subset \mathfrak{D}(a)$ and
$p_n^\bot=\mathbf{1}-p_n\in \mathcal{P}_{fin}(\mathcal{M})$ for every
$n\in\mathbb{N}$, where
$\mathbb{N}$ is the set of all natural numbers. Let us denote by
$S(\mathcal{M})$ the set of all measurable operators.

Let $a,b\in S(\mathcal{M})$. It is well known that $a+b,\ ab$ and $a^*$ are
densely-defined and preclosed operators. Moreover, the closures
$\overline{a+b}$ (strong sum), $\overline{ab}$ (strong product)
and $a^{\ast}$ are also measurable, and equipped with this operations $S(\mathcal{M})$ is a unital
$\ast$-algebra over the field $\mathbb{C}$ of complex numbers  \cite{Seg}. It is clear that $\mathcal{M}$ is a
$\ast$-subalgebra of $S(\mathcal{M})$.

A densely-defined linear operator $a$ affiliated with
$\mathcal{M}$ is called \emph{locally measurable} with respect to
$\mathcal{M}$ if there is a sequence $\{z_n\}_{n=1}^\infty$ of
central projections in $\mathcal{M}$ such that $z_n \uparrow
\mathbf{1},\ z_n(H)\subset\mathfrak{D}(a)$ and $az_n\in
S(\mathcal{M})$ for all $n\in\mathbb{N}$.

The set $LS(\mathcal{M})$ of all locally measurable operators
 is a unital $\ast$-algebra over the
field $\mathbb{C}$ with respect to the same algebraic operations
as in $S(\mathcal{M})$ \cite{Yead} and $S(\mathcal{M})$ is a
$\ast$-subalgebra of $LS(\mathcal{M})$. It is clear that if $\mathcal{M}$ is finite, the algebras
$S(\mathcal{M})$ and $LS(\mathcal{M})$ coincide. If von Neumann algebra $\mathcal{M}$
is of type $III$ and $\dim(\mathcal{Z}(\mathcal{M}))=\infty$, then
$S(\mathcal{M})=\mathcal{M}$ but $LS(\mathcal{M})\neq
\mathcal{M}$.

For every subset $E\subset LS(\mathcal{M})$, the sets of all
self-adjoint (resp., positive) operators in $E$ will be denoted by
$E_h$ (resp. $E_+$). The partial order in $LS(\mathcal{M})$ is
defined by its cone $LS_+(\mathcal{M})$ and is denoted by $\leq$.

Let $a$ be a closed operator with dense domain $\mathfrak{D}(a)$
in $H$, let $a=u|a|$ be the polar decomposition of the operator $a$,
where $|a|=(a^*a)^{\frac{1}{2}}$ and $u$ is a  partial isometry
in $B(H)$ such that $u^*u$ (respectively, $uu^*$) is the right (left) support $r(a)$  (respectively, $l(a)$) of $a$. It is
known that $a = |a^*|u$ and  $a\in LS(\mathcal{M})$ (respectively, $a\in S(\mathcal{M})$) if and only if $|a|\in
LS(\mathcal{M})$ (respectively, $|a|\in
S(\mathcal{M})$) and $u\in \mathcal{M}$~\cite[\S\S\,2.2,2.3]{MCh}. If
$a$ is a self-adjoint operator affiliated with $\mathcal{M}$, then
the spectral family of projections $e_\lambda(a)= e_{(-\infty,\lambda]}(a), \ {\lambda\in
  \mathbb{R}}$, for $a$ belongs to
$\mathcal{M}$~\cite[\S\,2.1]{MCh}. A locally measurable operator $a$
is measurable if and only if
$e_\lambda^\bot(|a|)\in \mathcal{P}_{fin}(\mathcal{M})$ for some $\lambda>0$ \cite[\S\,2.2]{MCh}.

In what follows, we use the notation  $n(a)=\mathbf{1}-r(a)$ for the projection onto the kernel of the operator $a$.

Assume now that $\mathcal{M}$ is a semifinite von Neumann algebra equipped with a faithful
normal semifinite trace $\tau$. A densely-defined closed linear operator $a$ affiliated with $\mathcal{M}$ is called  $\tau$-measurable if for each $\varepsilon >0$
there exists  $e\in \mathcal{P}(\mathcal{M})$ with $\tau (e^{\perp})\leq \varepsilon$ such that $e(H)\subset \mathfrak{D}(a)$.
Let us denote by $S(\mathcal{M},\tau)$ the set of all $\tau$-measurable operators. It is well-known \cite{Ne} that $S(\mathcal{M},\tau)$ is a $\ast$-subalgebra of $S(\mathcal{M})$ and $\mathcal{M} \subset S(\mathcal{M},\tau)$. It is clear that if $\mathcal{M}$ is a semifinite factor, the algebras $S(\mathcal{M},\tau)$ \ and  $S(\mathcal{M})$  coincide. Note also that for every  $a \in S(\mathcal{M},\tau)$ there exists $\lambda>0$  such that
$\tau(e_\lambda^\bot(|a|)) < \infty$ (see \cite{Ne} and \cite[\S\,2.6]{MCh}).

Measure topology is defined in $S(\mathcal{M},\tau)$ by the family $V(\varepsilon,\delta)$ $\varepsilon>0,\delta>0,$ of neighborhoods of zero
$$V(\varepsilon,\delta)=\{ a\in S(\mathcal{M},\tau) : \|ae\|_{\mathcal{M}}\leq \delta \text { \ for some } e\in \mathcal{P}(\mathcal{M}) \text {\ with } \tau (e^{\perp})\leq \varepsilon \}$$
Convergence of the sequence $\{a_n\} \subset S(\mathcal{M},\tau)$ in measure topology is called {\it convergence in measure}. When equipped with measure topology, $S(\mathcal{M},\tau)$ is a complete metrizable topological $\ast$-algebra (see \cite{Ne}). For basic properties of the measure topology, see \cite{Ne}. We remark only that $e_n\rightarrow 0$ in measure, $e_n\in \mathcal{P}(\mathcal{M}),$ if and only if $\tau(e_n)\rightarrow 0.$

Let $\mathcal{M}$ be a von Neumann algebra equipped with a faithful normal semifinite trace $\tau.$ We set
$$(L_1\cap L_{\infty})(\mathcal{M},\tau)=\Big\{x\in\mathcal{M}:\ \tau(|x|)<\infty\Big\}.$$



The following property is standard.

\begin{property}\label{t1} Let $\mathcal{M}$ be a semifinite von Neumann algebra and let $\tau$ be a faithful normal semifinite trace on $\mathcal{M}.$ If $x,y\in (L_1\cap L_{\infty})(\mathcal{M},\tau),$ then $\tau(xy)=\tau(yx).$
\end{property}


\section{The Fuglede-Putnam theorem in  $S(\mathcal{M},\tau)$}\label{tau measurable section}

The proof of Theorem \ref{t3} in full generality is based on its special case for the $\ast$-algebra $S(\mathcal{M},\tau).$


\begin{thm} \label{t2} Let $\mathcal{M}$ be a semifinite von Neumann algebra and let $\tau$ be a faithful normal semifinite trace on $\mathcal{M}$. Let $a,b,c\in S(\mathcal{M},\tau),$ and let $a$ and $b$ be normal. If $ac=cb,$ then $a^*c=cb^*.$
\end{thm}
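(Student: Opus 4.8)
The plan is to adapt Rosenblum's proof of the classical Fuglede--Putnam theorem to the $\ast$-algebra $S(\mathcal M,\tau)$, replacing the Liouville step (which needs a norm) by a \emph{scalar} Liouville argument obtained through the trace $\tau$. First I would record the purely algebraic propagation of the hypothesis: since $S(\mathcal M,\tau)$ is associative under strong multiplication, $ac=cb$ yields $a^{n}c=cb^{n}$ for every $n\ge 0$ by induction. The quantity I ultimately want is the ``exponentiated'' identity
\[
e^{\lambda(a^{*}-a)}\,c=c\,e^{\lambda(b^{*}-b)},\qquad \lambda\in\mathbb R,
\]
whose derivative at $\lambda=0$ reads $(a^{*}-a)c=c(b^{*}-b)$; combined with $ac=cb$ this collapses precisely to $a^{*}c=cb^{*}$. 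The decisive point is that $a^{*}-a$ and $b^{*}-b$ are skew-adjoint and affiliated with $\mathcal M$, so by the Borel functional calculus for the normal operators $a$ and $b$ the operators $U(\lambda)=e^{\lambda(a^{*}-a)}$ and $V(\lambda)=e^{\lambda(b^{*}-b)}$ are \emph{unitaries in $\mathcal M$}. Hence every expression $U(\lambda)\,c\,V(\lambda)$ lives in $S(\mathcal M,\tau)$ and no unbounded exponential is ever formed.

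Following Rosenblum, I would introduce the $S(\mathcal M,\tau)$-valued function $F(\lambda)=e^{\lambda a^{*}-\bar\lambda a}\,c\,e^{\bar\lambda b-\lambda b^{*}}$, which equals $U(\lambda)\,c\,V(\lambda)$ on the real axis, and aim to show $F(\lambda)\equiv F(0)=c$. Two features of $F$ must be extracted. First, $F$ is, in a suitable sense, holomorphic in $\lambda$: here one uses the propagated identity $a^{n}c=cb^{n}$, equivalently $e^{\mu a}c=c\,e^{\mu b}$, to rewrite $F$ as $e^{\lambda a^{*}}c\,e^{-\lambda b^{*}}$, an expression in which only $\lambda$, not $\bar\lambda$, appears. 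Second, the range of $F$ is bounded, because $U(\lambda)$ and $V(\lambda)$ are unitary, so each $F(\lambda)$ has the same generalised singular value function as $c$.

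Because the measure topology on $S(\mathcal M,\tau)$ is not locally convex, a vector-valued Liouville theorem is unavailable, so I would pass to scalars through $\tau$. For a family of elements $x,y\in(L_{1}\cap L_{\infty})(\mathcal M,\tau)$ chosen so that $y\,F(\lambda)\,x\in L_{1}(\mathcal M,\tau)$, I would study $G(\lambda)=\tau\big(y\,F(\lambda)\,x\big)$; unitarity of $U,V$ together with Property \ref{t1} and the H\"older-type bounds for $\tau$ guarantee that $G$ is well defined, entire in $\lambda$, and bounded, whence constant by the classical Liouville theorem. Letting $x,y$ range over a family sufficient to separate the points of $S(\mathcal M,\tau)$ would then force $F(\lambda)=c$ for all $\lambda$, and differentiation at $\lambda=0$ delivers the theorem.

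The main obstacle is that for genuinely unbounded $a,b$ the series defining $e^{\lambda a^{*}}$ and $e^{\mu a}$ need not converge in measure, so the holomorphy of $F$ and the rewriting above are only formal. I would resolve this by truncation: set $a_{n}=a\,e_{[0,n]}(|a|)$ and $b_{n}=b\,e_{[0,n]}(|b|)$, bounded normal operators with $a_{n}\to a$ and $b_{n}\to b$ in measure, for which all exponential series converge in $\mathcal M$ and the analogue $F_{n}(\lambda)=U_{n}(\lambda)\,c\,V_{n}(\lambda)$ is genuinely entire. The price is that the intertwining becomes approximate, $a_{n}c-cb_{n}=e_{[0,n]}(|a|)\,cb-cb\,e_{[0,n]}(|b|)\to 0$ in measure, and propagating this defect through the trace--Liouville argument is the technical heart of the proof. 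Here I would lean on the joint continuity of multiplication in the measure topology, on the equivalence $\tau(e_{n})\to 0\Leftrightarrow e_{n}\to 0$ in measure applied to the spectral tails $e_{[0,n]}^{\perp}(|a|),e_{[0,n]}^{\perp}(|b|)$, and on the uniform (in $\lambda$ and $n$) control of $F_{n}(\lambda)$ afforded by unitary invariance of the singular value function, in order to pass to the limit and recover $a^{*}c=cb^{*}$. As a cleaner but less general warm-up, worth isolating first, the case $c\in\mathcal M$ falls under Putnam's classical theorem for a bounded intertwiner, so the entire novelty lies in removing the boundedness of $c$.
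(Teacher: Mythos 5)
You follow Rosenblum's entire-function proof, and both places where you yourself flag difficulties are genuine gaps; the second one is fatal as stated. Your truncation $a_n=a\,e_{[0,n]}(|a|)$, $b_n=b\,e_{[0,n]}(|b|)$ keeps $c$ untouched and therefore yields only the approximate relation $\delta_n:=a_nc-cb_n\to 0$ in measure. But the Rosenblum mechanism lives on exact identities: holomorphy of $F_n(\lambda)=U_n(\lambda)\,c\,V_n(\lambda)$ in $\lambda$ alone requires the exact exponentiated intertwining $e^{\mu a_n}c=ce^{\mu b_n}$ for \emph{all} $\mu\in\mathbb{C}$, and the boundedness fed into Liouville must hold on all of $\mathbb{C}$. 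With a nonzero defect, Duhamel's formula gives
\[
e^{\mu a_n}\,c\,e^{-\mu b_n}-c=\mu\int_0^1 e^{s\mu a_n}\,\delta_n\,e^{-s\mu b_n}\,ds,
\]
and the integrand is amplified by $\|e^{s\mu a_n}\|\,\|e^{-s\mu b_n}\|\leq e^{2n|\mu|}$; since for each fixed $n$ the estimate must be uniform as $|\mu|=|\lambda|\to\infty$, no rate of decay of $\delta_n$ can compensate: $F_n$ is either non-holomorphic (as defined, it involves $\bar\lambda$) or unbounded (as rewritten without $\bar\lambda$), so Liouville never applies. The repair is to truncate $c$ as well: with $q_n=e_{D_n}(a)$, $r_n=e_{D_n}(b)$, $D_n=\{z:|z|\leq n\}$, one has $q_na=aq_n$ and $r_nb=br_n$, hence the \emph{exact} compressed relation $(aq_n)\cdot(q_ncr_n)=(q_ncr_n)\cdot(r_nb)$ with $aq_n,br_n\in\mathcal{M}$ normal. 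This is precisely how the paper reduces Theorem \ref{t2} to the case of bounded $a,b$, after which $q_n(a^*c-cb^*)r_n=0$ and $q_n,r_n\to\mathbf{1}$ in measure finish the argument.

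Even after this repair, the remaining case (bounded normal $a,b$, unbounded $c$) is not closed by your trace--Liouville scalarization, because the functionals $z\mapsto\tau(yzx)$ need not be finite on the unitary orbit of $c$, let alone uniformly bounded and separating. Unitary invariance gives only $\mu_t\bigl(yF(\lambda)x\bigr)\leq\|x\|_\infty\|y\|_\infty\,\mu_t(c)$, which is integrable only when $c$ itself is locally integrable: already for $\mathcal{M}=L^\infty(0,1)$ with $\tau=\int_0^1$, $c(t)=1/t\in S(\mathcal{M},\tau)$ and $x=y=\chi_{(0,1/2)}$ one has $\tau(ycx)=\infty$. Since $\mu_t(c)$ may blow up arbitrarily fast at $0$, bounded $x,y$ with finite-trace supports never suffice, while pairs $(x,y)$ whose singular values decay faster than $1/\mu(c)$ are not shown (and are not obviously able) to separate the differences $F(\lambda)-c$; recall that the measure topology can have trivial continuous dual (e.g. $S(L^\infty(0,1),\int)=L^0(0,1)$), so no soft separation argument is available, and entirety of $G$ (differentiation under the trace) would also need justification. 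The paper avoids Liouville entirely at this stage: Lemmas \ref{l1} and \ref{l2} use trace identities (Property \ref{t1}) to show that bounded normal $a,b$ intertwined by $c\geq 0$ commute with the spectral projections of $c$; Lemma \ref{l3} then splits $c$ into a bounded piece, handled by the classical Fuglede--Putnam theorem, and a piece bounded away from zero, handled directly; and Lemma \ref{ber last lemma} together with comparison of $n(c)$ and $n(c^*)$ over central summands removes the positivity assumption. You would need arguments of comparable strength to make the scalarization step rigorous.
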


Our strategy of the proving Theorem \ref{t2} relies on a number of auxiliary lemmas. Whereas some of them look similar to those used in \cite{HSWY}, the lemmas below appear to be stronger than their counterparts from \cite{HSWY}.

\begin{lem}\label{l1} If $a\in\mathcal{M}$ is normal,  $p \in \mathcal{P}(\mathcal{M})$ and $\tau(p) < \infty$  with $ap=pap$, then $ap=pa.$
\end{lem}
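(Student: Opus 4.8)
The plan is to reduce the statement to the single relation $pap^\perp=0$, where $p^\perp=\mathbf{1}-p$. First I would rewrite the hypothesis $ap=pap$ as $p^\perp a p=0$, which already records the fact that $ap=pap$. If, in addition, one knew that $pap^\perp=0$, then $pa=pap+pap^\perp=pap$, and comparing with $ap=pap$ would give the desired equality $ap=pa$. So the entire problem is to manufacture the ``missing'' relation $pap^\perp=0$ out of the normality of $a$ together with the finiteness of $\tau(p)$.

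To this end I would compress the normality identity $a^*a=aa^*$ by $p$ on both sides. Using $ap=pap$, the first compression is
\[ pa^*ap=(ap)^*(ap)=(pap)^*(pap)=(pa^*p)(pap), \]
while inserting $\mathbf{1}=p+p^\perp$ between $a$ and $a^*$ and using $ap=pap$ once more gives
\[ paa^*p=pa(p+p^\perp)a^*p=(pap)(pa^*p)+(pap^\perp)(pap^\perp)^*. \]
Since $a$ is normal the two left-hand sides coincide, so subtracting yields the commutator identity
\[ (pa^*p)(pap)-(pap)(pa^*p)=(pap^\perp)(pap^\perp)^*. \]

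The final step is to apply the trace to this identity. Because $a\in\mathcal{M}$ is bounded and $\tau(p)<\infty$, both $pap$ and $pa^*p$ have left and right supports dominated by $p$ and therefore lie in $(L_1\cap L_\infty)(\mathcal{M},\tau)$; hence Property~\ref{t1} applies and gives $\tau\big((pa^*p)(pap)\big)=\tau\big((pap)(pa^*p)\big)$. Thus the trace of the left-hand side above vanishes, whence $\tau\big((pap^\perp)(pap^\perp)^*\big)=0$. As $\tau$ is faithful and the operator $(pap^\perp)(pap^\perp)^*$ is positive, this forces $pap^\perp=0$, which by the first paragraph completes the argument.

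I expect the crux to be precisely the interplay in this last step: normality is what converts the two compressions into a commutator, and the finiteness $\tau(p)<\infty$ is exactly what places $pap$ and $pa^*p$ in $(L_1\cap L_\infty)(\mathcal{M},\tau)$, so that the two ``diagonal'' terms cancel under the trace. Without the finite-trace hypothesis one cannot invoke Property~\ref{t1}, and the conclusion can genuinely fail (an invariant-but-not-reducing subspace of a normal operator, such as the Hardy space for the bilateral shift, furnishes a counterexample). Hence the real content lies in justifying this cancellation, rather than in any single computation.
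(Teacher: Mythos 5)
Your proof is correct and takes essentially the same route as the paper: your identity $(pa^*p)(pap)-(pap)(pa^*p)=(pap^{\perp})(pap^{\perp})^*$ is exactly the paper's equation $a_1^*a_1=a_1a_1^*+a_2a_2^*$ (with $a_1=pap$, $a_2=pap^{\perp}$) rearranged, obtained from normality and the hypothesis $ap=pap$ in the same way. Both arguments then finish identically, applying the trace with Property~\ref{t1} (using $\tau(p)<\infty$) and faithfulness of $\tau$ to force $pap^{\perp}=0$, hence $pa=pap=ap$.
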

\begin{proof} Denote, for brevity,
$$a_1=pap,\quad a_2=pa(\mathbf{1}-p).$$
Due to the normality of $a$ and using the equality $ap=pap$, we have
$$a_1^*a_1=(ap)^*(ap)=pa^*ap=paa^*p=(pa)(pa)^*=(a_1+a_2)(a_1+a_2)^*=a_1a_1^*+a_2a_2^*.$$
Since $\tau(p) < \infty$, it follows that $a_1,a_2\in(L_1\cap L_{\infty})(\mathcal{M},\tau).$ Taking the trace and using Property \ref{t1}, we conclude that $\tau(a_2a_2^*)=0.$ Since $\tau$ is faithful, it follows that $a_2=0.$ This completes the proof.
\end{proof}

\begin{lem}\label{l2} Let $a,b\in\mathcal{M}$ are normal, $c\in S_+(\mathcal{M},\tau)$ and $ac=cb$. Let $\lambda>0$ be such that $\tau(e_{(\lambda,+\infty)}(c)) < \infty$. Setting $p_1=e_{[0,\lambda]}(c)$ and $p_2=e_{(\lambda,+\infty)}(c),$ we obtain
$$p_ia=ap_i,\ p_ib=bp_i,\ i=1,2.$$
\end{lem}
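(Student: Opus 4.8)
The plan is to decompose $a$ and $b$ into $2\times 2$ blocks with respect to the orthogonal pair $p_1,p_2$ and to prove that every off-diagonal block vanishes; since $p_1+p_2=\mathbf 1$, the relations $p_iap_j=0=p_ibp_j$ for $i\neq j$ are exactly equivalent to the asserted commutation. Write $a_{ij}=p_iap_j$ and $b_{ij}=p_ibp_j$, and set $\|x\|_2^2=\tau(x^*x)$. Because $\tau(p_2)<\infty$, every block carrying at least one factor $p_2$ has $\|a_{ij}\|_2<\infty$ (indeed $a_{ij}^*a_{ij}\leq\|a\|^2p_2$ when $j=2$, and symmetrically when $i=2$), so all traces below are finite. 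Both $p_1,p_2$ commute with $c$, whence $p_1cp_1\leq\lambda p_1$ is bounded while $p_2cp_2\geq\lambda p_2$.

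First I would extract the block form of the hypothesis. Compressing $ac=cb$ by $p_1(\cdot)p_2$ and by $p_2(\cdot)p_1$ and using $cp_i=p_ic$ yields
\begin{equation*}
a_{12}\,(p_2cp_2)=(p_1cp_1)\,b_{12},\qquad a_{21}\,(p_1cp_1)=(p_2cp_2)\,b_{21}.
\end{equation*}
In the first identity the right-hand side is square-integrable (as $p_1cp_1$ is bounded and $b_{12}$ lies in $\|\cdot\|_2$), hence so is the left-hand side, and from $p_2cp_2\geq\lambda p_2$ and $p_1cp_1\leq\lambda p_1$ one reads off $\lambda^2\tau(a_{12}^*a_{12})\leq\tau\big(a_{12}^*a_{12}(p_2cp_2)^2\big)=\tau\big(b_{12}^*(p_1cp_1)^2b_{12}\big)\leq\lambda^2\tau(b_{12}^*b_{12})$, that is $\|a_{12}\|_2\leq\|b_{12}\|_2$. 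The second identity gives symmetrically $\|b_{21}\|_2\leq\|a_{21}\|_2$.

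Next I would feed in normality. Compressing $a^*a=aa^*$ by $p_2$ on both sides gives $a_{12}^*a_{12}+a_{22}^*a_{22}=a_{21}a_{21}^*+a_{22}a_{22}^*$; taking the finite trace and cancelling $\tau(a_{22}^*a_{22})=\tau(a_{22}a_{22}^*)$ via Property \ref{t1} yields $\|a_{12}\|_2=\|a_{21}\|_2$, and likewise $\|b_{12}\|_2=\|b_{21}\|_2$ from the normality of $b$. Chaining these with the two inequalities above gives $\|a_{12}\|_2\leq\|b_{12}\|_2=\|b_{21}\|_2\leq\|a_{21}\|_2=\|a_{12}\|_2$, so all four are equal and in particular the first estimate is an equality, i.e. $\tau\big(a_{12}^*a_{12}\big((p_2cp_2)^2-\lambda^2p_2\big)\big)=0$. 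Since the positive operator $(p_2cp_2)^2-\lambda^2p_2$ is injective on $p_2(H)$ — this is precisely where it matters that $p_2=e_{(\lambda,+\infty)}(c)$ is the \emph{strict} tail, so that $c>\lambda$ there — faithfulness of $\tau$ forces $a_{12}=0$, after which the equality chain collapses to $0$ and $a_{21}=b_{12}=b_{21}=0$ as well, which is the claim.

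The delicate point is this last step, because $p_2cp_2$ need not be bounded, so $(p_2cp_2)^2-\lambda^2p_2$ is injective but not bounded below, and the passage from a vanishing trace to $a_{12}=0$ is not immediate. I would resolve it by truncation: put $r_n=e_{(\lambda,n]}(c)\uparrow p_2$, note $0\leq(p_2cp_2)^2r_n\leq(p_2cp_2)^2$, deduce $\tau\big(a_{12}^*a_{12}\big((p_2cp_2)^2-\lambda^2p_2\big)r_n\big)=0$ with a now bounded factor that is injective on $r_n(H)$, conclude $a_{12}r_n=0$ for every $n$, and let $n\to\infty$. This truncation, together with the open/closed asymmetry of the cut at $\lambda$, is the only genuinely nontrivial ingredient; the rest is block bookkeeping and the trace identity of Property \ref{t1}.
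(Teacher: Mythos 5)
Your proof is correct, and its core is the same as the paper's: the identical $2\times 2$ block decomposition, normality plus the trace identity (Property \ref{t1}) to get $\|a_{12}\|_2=\|a_{21}\|_2$ and $\|b_{12}\|_2=\|b_{21}\|_2$, the compressed hypothesis plus the spectral bounds $p_1c^2p_1\leq\lambda^2 p_1$, $p_2c^2p_2\geq\lambda^2 p_2$ for the cross inequalities, and the same closed equality chain, with the strictness of the cut at $\lambda$ entering only in the final vanishing step. You diverge in two places, both defensibly and one genuinely to your advantage. First, the endgame: the paper only extracts $a_{12}=0$ from the equality case, concludes $ap_2=p_2ap_2$, and then invokes a separate lemma (Lemma \ref{l1}, itself one more normality-plus-trace argument) to upgrade this to $ap_2=p_2a$, repeating the procedure for $b$; you instead note that once $a_{12}=0$ the chain $\|a_{12}\|_2\leq\|b_{12}\|_2=\|b_{21}\|_2\leq\|a_{21}\|_2=\|a_{12}\|_2$ collapses to zero, killing all four off-diagonal blocks of $a$ and $b$ simultaneously. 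This bypasses Lemma \ref{l1} entirely and is a real streamlining. Second, the strict-tail step: the paper bounds $p_2(c^2-\lambda^2\mathbf{1})p_2\geq\varepsilon^2 e_{(\lambda+\varepsilon,+\infty)}(c)$ and lets $\varepsilon\to 0$ (spectral projections converging in measure), whereas you truncate above by $r_n=e_{(\lambda,n]}(c)\uparrow p_2$ and argue via injectivity; both work, but be aware that in your version boundedness of the truncated factor is not what does the work — that factor is still not bounded below on $r_n(H)$, so you must still pass through faithfulness giving $a_{12}(dr_n)^{1/2}=0$ and then density of the range of the injective positive operator $dr_n$ in $r_n(H)$, exactly as you would without truncating. (One cosmetic caution: expressions like $\tau\bigl(a_{12}^*a_{12}(p_2cp_2)^2\bigr)$ with an unbounded factor should be read as $\tau\bigl(a_{12}(p_2cp_2)^2a_{12}^*\bigr)$, i.e. as the trace of a positive operator, which is how the paper writes them; your derivation of $\|b_{21}\|_2\leq\|a_{21}\|_2$ directly from the $p_2(\cdot)p_1$ compression, rather than the paper's trick of applying the first inequality to the adjoint triple $(b^*,a^*,c)$, is an equivalent and equally clean route.)
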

\begin{proof} Set $a_{ij}=p_iap_j$ and $b_{ij}=p_ibp_j$ for $i,j=1,2.$

{\bf Step 1:} We claim that
$$\tau(a_{12}^*a_{12})=\tau(a_{21}^*a_{21}),\quad \tau(b_{12}^*b_{12})=\tau(b_{21}^*b_{21}).$$

Indeed, use the equality $a^*a = aa^*$ and Property \ref{t1}, we have
$$\tau(a_{12}^*a_{12})=\tau(p_2a^*p_1ap_2)=\tau(p_2a^*ap_2)-\tau(p_2a^*p_2ap_2)=$$
$$=\tau(p_2aa^*p_2)-\tau(p_2ap_2a^*p_2)=\tau(p_2ap_1a^*p_2)=\tau(a_{21}a_{21}^*)=\tau(a_{21}^*a_{21}).$$
The proof of the second equality in the claim is identical.

{\bf Step 2:} We claim that
$$\tau(a_{12}^*a_{12})\leq\tau(b_{12}^*b_{12})$$
and
$$\tau(b_{21}^*b_{21})\leq\tau(a_{21}^*a_{21}).$$
Since $p_2c^2p_2\geq \lambda^2p_2,$ it follows that
$$a_{12}a_{12}^*=p_1a\cdot p_2\cdot a^*p_1\leq \lambda^{-2}\cdot p_1a\cdot p_2c^2p_2\cdot a^*p_1=\lambda^{-2}(a_{12}c)(a_{12}c)^*.$$
By assumption,
\begin{equation}\label{ch1}
a_{12}c=p_1ap_2\cdot c=p_1\cdot ac\cdot p_2=p_1\cdot cb\cdot p_2=c\cdot p_1bp_2=cb_{12}.
\end{equation}
Thus,
\begin{equation}\label{ch6}
a_{12}a_{12}^*\leq\lambda^{-2}(cb_{12})(cb_{12})^*.
\end{equation}
Since $cp_1\in\mathcal{M}$ and since $b_{12}\in(L_1\cap L_{\infty})(\mathcal{M},\tau),$ it follows that
$$cb_{12}=cp_1\cdot b_{12}\in (L_1\cap L_{\infty})(\mathcal M,\tau).$$
Hence (see Property \ref{t1}),
$$\tau(a_{12}^*a_{12})=\tau(a_{12}a_{12}^*)\stackrel{\eqref{ch6}}{\leq}\lambda^{-2}\tau((cb_{12})(cb_{12})^*)=
\lambda^{-2}\tau((cb_{12})^*(cb_{12}))<\infty.$$
Using now the inequality  $p_1c^2p_1\leq \lambda^2p_1$, we have that
$$(cb_{12})^*(cb_{12})=b_{12}^*c^2b_{12}=b_{12}^*\cdot p_1c^2p_1\cdot b_{12}\leq\lambda^2\cdot b_{12}^*p_1b_{12}=\lambda^2b_{12}^*b_{12}.$$
and
\begin{equation}\label{ch2}
\tau(a_{12}^*a_{12}) \leq \tau(b_{12}^*b_{12}).
\end{equation}
Let $a'=b^*$ and $b'=a^*.$ Taking the adjoints in the equality $ac=cb,$ we obtain $a'c=cb'.$ In addition
$$a_{12}'\stackrel{def}{=}p_1a'p_2=b_{21}^*,\quad b_{12}'\stackrel{def}{=}p_1b'p_2=a_{21}^*.$$
Applying \eqref{ch2} to the triple $(a',b',c),$ we obtain
$$\tau(b_{21}^*b_{21})=\tau(b_{21}b_{21}^*) =\tau((a'_{12})^{\ast}a'_{12})\leq \tau((b'_{12})^{\ast}b'_{12}) =\tau(a_{21}a_{21}^*)=\tau(a_{21}^*a_{21}).$$
This proves the claim.

{\bf Step 3:}
Using Steps 1, 2, we obtain
$$\tau(a_{12}^*a_{12})\leq\tau(b_{12}^*b_{12})=\tau(b_{21}^*b_{21})\leq\tau(a_{21}^*a_{21})=\tau(a_{12}^*a_{12}).$$
Thus,
$$\tau(a_{12}^*a_{12})=\tau(a_{21}^*a_{21})=\tau(b_{12}^*b_{12})=\tau(b_{21}^*b_{21}).$$

{\bf Step 4:} We claim that $ap_2=p_2a$ and $bp_2=p_2b$.

By \eqref{ch1}, we have
$$a_{12}c=cb_{12}.$$
Now, using Property \ref{t1}, we obtain
$$\tau((a_{12}c)(a_{12}c)^*)=\tau((cb_{12})(cb_{12})^*)=\tau((cb_{12})^*(cb_{12})).$$
Definition of $p_1$ now yields
$$(cb_{12})^*(cb_{12})=b_{12}^*c^2b_{12}=b_{12}^*\cdot p_1c^2p_1\cdot b_{12}\leq\lambda^2 b_{12}^*\cdot p_1\cdot b_{12}=\lambda^2b_{12}^*b_{12}.$$
It follows from Step 3 that
$$\tau((a_{12}c)(a_{12}c)^*)\leq \lambda^2\tau(a_{12}^*a_{12}).$$
In other words,
$$\tau(a_{12}\cdot p_2c^2p_2\cdot a_{12}^*)=\tau((a_{12}c)(a_{12}c)^*)\leq \lambda^2\tau(a_{12}a_{12}^*)=\tau(a_{12}\cdot \lambda^2p_2\cdot a_{12}^*).$$
Hence,
$$\tau(a_{12}\cdot p_2(c^2-\lambda^2 \mathbf{1})p_2\cdot a_{12}^*)\leq0.$$
Since $\tau$ is faithful and since
$$a_{12}\cdot p_2(c^2-\lambda^2 \mathbf{1})p_2\cdot a_{12}^*\geq 0,$$
it follows that
\begin{align}\label{ch3}
a_{12}\cdot p_2(c^2-\lambda^2 \mathbf{1})p_2\cdot a_{12}^*=0.
\end{align}

For every $\varepsilon >0$ and $p_\varepsilon =e_{(\lambda+\varepsilon,+\infty)}(c)$ we have
$c^2 p_\varepsilon = p_\varepsilon c^2 p_\varepsilon \geq (\lambda + \varepsilon)^2 p_\varepsilon.$ Therefore,
$$c^2 p_2\geq \lambda^2p_2+\varepsilon^2p_{\varepsilon},\quad p_2(c^2-\lambda^2 \mathbf{1})p_2\geq \epsilon^2p_\varepsilon.$$
We now infer from \eqref{ch3} that
$$a_{12}\cdot p_\varepsilon\cdot a_{12}^*=0.$$
Since $p_\varepsilon \rightarrow p_2$ in measure as $\epsilon\to 0,$ we obtain $a_{12}a_{12}^*=0.$ Thus, $a_{12}=0$ and
$$ap_2 = p_1ap_2 + (\mathbf{1}-p_1)ap_2 = (\mathbf{1}-p_1)ap_2 = p_2ap_2.$$
Hence, we infer from Lemma \ref{l1} that $ap_2=p_2a.$ Similarly, $bp_2=p_2b.$
\end{proof}

\begin{lem}\label{l3} Let $a,b\in\mathcal{M}$ be normal, $c\in S_+(\mathcal{M},\tau)$ and $ac=cb$. Then $a^*c=cb^*.$
\end{lem}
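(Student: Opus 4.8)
The plan is to reduce this statement, in which $c$ is a possibly unbounded positive $\tau$-measurable operator, to a family of assertions about \emph{bounded} operators, for which the classical Fuglede--Putnam theorem is available, and then to recover the unbounded conclusion by a limiting argument in the measure topology. The bridge between the two settings is Lemma \ref{l2}: it guarantees that $a$ and $b$ commute with the relevant spectral projections of $c$, and this commutation is precisely what allows the intertwining relation $ac=cb$ to be transported to bounded truncations of $c$.

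Concretely, I would first use $c\in S_+(\mathcal{M},\tau)$ to fix $\lambda>0$ with $\tau(e_{(\lambda,+\infty)}(c))<\infty$. For every threshold $\mu\geq\lambda$ one has $\tau(e_{(\mu,+\infty)}(c))\leq\tau(e_{(\lambda,+\infty)}(c))<\infty$, so Lemma \ref{l2} applies with $\lambda$ replaced by $\mu$ and shows that the spectral projection $q_\mu:=e_{[0,\mu]}(c)$ commutes with both $a$ and $b$. I would then introduce the bounded positive truncation $c_\mu:=c\,q_\mu\in\mathcal{M}_+$. The key observation is that the intertwining relation survives truncation: since $c$ commutes with its own spectral projection $q_\mu$ and $b$ commutes with $q_\mu$, one computes $a c_\mu=a c\,q_\mu=c b\,q_\mu=c\,q_\mu b=c_\mu b$. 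Now $a$, $b$ and $c_\mu$ are all bounded operators on $H$ with $a,b$ normal, so the classical (bounded) Fuglede--Putnam theorem \cite{F,P} immediately yields $a^* c_\mu=c_\mu b^*$ for every $\mu\geq\lambda$.

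Finally I would let $\mu\to+\infty$. As $c$ is $\tau$-measurable we have $\tau(e_{(\mu,+\infty)}(c))\to0$, and since $(c-c_\mu)q_\mu=0$ this forces $c_\mu=c\,q_\mu\to c$ in measure. Because $S(\mathcal{M},\tau)$ is a topological $\ast$-algebra, left and right multiplication by the fixed bounded operators $a^*$ and $b^*$ are continuous in the measure topology, so passing to the limit in the identity $a^* c_\mu=c_\mu b^*$ gives $a^* c=c b^*$, as desired. The only genuinely delicate step is the verification that $a c_\mu=c_\mu b$, which is exactly where the commutation of $a$ and $b$ with the projections $e_{[0,\mu]}(c)$ supplied by Lemma \ref{l2} is indispensable; once this is secured, both the reduction to the bounded Fuglede--Putnam theorem and the convergence in measure are routine.
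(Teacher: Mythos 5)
Your proof is correct, and it takes a genuinely different (though closely related) route from the paper's. The paper fixes a single threshold $\lambda$ and splits $c=c_1+c_2$, where $c_1=ce_{[0,\lambda]}(c)$ is bounded and $c_2=ce_{(\lambda,+\infty)}(c)$ is unbounded but supported on a finite-trace projection; the bounded piece is dispatched by the classical Fuglede--Putnam theorem, while the tail $c_2$ is handled by a commutation argument: applying Lemma \ref{l2} at every threshold $\nu\geq\lambda$, linear combinations of the projections $\mathbf{1}-e_\nu(c)$ converge to $c_2$ in measure, whence $ac_2=c_2a$ and $bc_2=c_2b$; combining this with $ac_2=c_2b$ and taking adjoints gives $a^*c_2=c_2b^*$, and the two pieces are then summed. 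You instead approximate $c$ by the one-parameter family of bounded truncations $c_\mu=ce_{[0,\mu]}(c)$, $\mu\geq\lambda$, transport the intertwining relation to each truncation via Lemma \ref{l2} at threshold $\mu$ (this is the step you rightly single out as the crux: $ac_\mu=ac\,q_\mu=cb\,q_\mu=c\,q_\mu b=c_\mu b$), apply the bounded Fuglede--Putnam theorem \cite{F,P} to every $c_\mu$, and take a measure-topology limit at the very end. Both arguments use the same three inputs -- Lemma \ref{l2} at a family of thresholds, the classical bounded theorem, and continuity of multiplication in the measure topology -- but deploy them differently: the paper invokes the measure topology to establish the stronger intermediate fact that $a$ and $b$ commute with the unbounded operator $c_2$, and then needs only one application of the bounded theorem; your version never requires commutation of $a,b$ with anything unbounded, trading it for infinitely many applications of the bounded theorem plus a final limit. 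Your route is, if anything, slightly more streamlined, since it avoids the adjoint manipulation $a^*c_2=(c_2a)^*=(c_2b)^*=(bc_2)^*=c_2b^*$ entirely; the remaining steps you call routine (positivity and boundedness of $c_\mu$, the convergence $c_\mu\to c$ in measure via $(c-c_\mu)q_\mu=0$ and $\tau(q_\mu^\perp)\to0$, and continuity of multiplication by the fixed bounded operators $a^*$ and $b^*$) are indeed routine and correct.
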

\begin{proof}
The assumption  $c\in S_+(\mathcal{M},\tau)$ guarantees that there exists $\lambda>0$ such that  $\tau(\mathbf{1}-e_\lambda(c))<\infty$.
Set $p_2=e_{(\lambda,+\infty)}(c), \ p_1 = (\mathbf{1} - p_2)$ and $a_j=ap_j,$ $b_j=bp_j,$ $c_j=cp_j,$ $j=1,2$.
By Lemma \ref{l2},  the operators $a$ and $b$ commute with $p_j$, in particular, $a_j$ and $b_j$ are normal $j=1,2$.
By the same lemma, the operator $a$ commutes with projections  $(\mathbf{1}-e_\nu(c))$ for all $\nu \geq \lambda$.
Since finite linear combinations of projections $(\mathbf{1}-e_\nu(c)),$ $\nu \geq \lambda$, converge to operator
$c_2$ in the measure topology and since multiplication in $S(\mathcal{M},\tau)$ is continuous in that topology,  it follows that
\begin{align}\label{ch5}
c_2a=ac_2\mbox{ and, similarly, }c_2b=bc_2.
\end{align}
Appealing now to Lemma \ref{l2}, we obtain
\begin{align}\label{ch4}
c_2a=ac_2=ac\cdot p_2=cb\cdot p_2=c\cdot bp_2\stackrel{L.\ref{l2}}{=}c\cdot p_2b=cp_2\cdot b=c_2b.
\end{align}
Combining now (\ref{ch4}) and (\ref{ch5}) yields
$$
a^*c_2=(c_2a)^*=(c_2b)^*= (bc_2)^*= c_2b^*.
$$

Taking \eqref{ch5} into account, we rewrite \eqref{ch4} as $ac_2=c_2b.$ Combining this with the assumption $ac=cb,$ we infer $a c_1 = c_1 b.$ Taking into account that
$c_1\in\mathcal{M}$ and applying the classical Fuglede-Putnam theorem we derive that
$$a^*c_1=c_1b^*.$$
Thus,
$$a^*c=a^*c_1+a^*c_2= c_1b^*+c_2b^*=cb^*.$$
\end{proof}

\begin{lem}\label{ber last lemma} Let $a,b\in\mathcal{M}$ be normal and let $c\in S(\mathcal{M},\tau)$ be such that $ac=cb.$ If $n(c^*)\preceq n(c)$ or $n(c)\preceq n(c^*),$ then $a^*c=cb^*.$
\end{lem}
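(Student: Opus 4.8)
The plan is to reduce to the positive case already settled in Lemma~\ref{l3} by combining the polar decomposition of $c$ with a unitary dilation inside the matrix algebra $M_2(\mathcal M)$.

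First I would dispose of one of the two hypotheses. Passing to adjoints in $ac=cb$ gives $b^*c^*=c^*a^*$, so the triple $(b^*,a^*,c^*)$ again consists of two normal operators intertwined by a $\tau$-measurable operator, and it interchanges the roles of $n(c)$ and $n(c^*)$ (recall $c^{**}=c$). Hence the case $n(c^*)\preceq n(c)$ follows from the case $n(c)\preceq n(c^*)$ applied to $(b^*,a^*,c^*)$, and it suffices to treat $n(c)\preceq n(c^*)$. Next, write the polar decomposition $c=u|c|$ with $u\in\mathcal M$, $u^*u=r(c)$, $uu^*=l(c)$ and $|c|\in S_+(\mathcal M,\tau)$. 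Since $n(c)\preceq n(c^*)$, choose a partial isometry $w\in\mathcal M$ with $w^*w=n(c)$ and $ww^*\le n(c^*)$, and set $v=u+w$. Because the ranges of $u$ and $w$ are orthogonal and $n(c)|c|=0$, one checks that $v$ is an isometry, $v^*v=\mathbf 1$, with $c=v|c|$; put $p_0=\mathbf 1-vv^*\le n(c^*)$, so that $p_0c=0$. Left-multiplying $a\,v|c|=v|c|\,b$ by $v^*$ yields $(v^*av)|c|=|c|b$, and $p_0c=0$ gives $p_0av|c|=p_0ac=0$.

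The main obstacle is that $v^*av$ need not be normal when $v$ is a proper isometry, so Lemma~\ref{l3} cannot be applied to the triple $(v^*av,b,|c|)$ directly. I would circumvent this by dilating $v$ to a unitary in the semifinite algebra $M_2(\mathcal M)$ equipped with the trace $\tau\otimes\mathrm{Tr}$. Using the Halmos dilation $V=\left(\begin{smallmatrix} v & p_0\\ 0 & -v^*\end{smallmatrix}\right)$, which is unitary because $p_0v=0$ and $vv^*+p_0=\mathbf 1$, set $\hat a=V^*\,\mathrm{diag}(a,a)\,V$ and $\hat b=\mathrm{diag}(b,b)$. Then $\hat a$ and $\hat b$ are normal (conjugation by a unitary preserves normality), their $(1,1)$-corners are $v^*av$ and $b$ respectively, and $\hat C=\mathrm{diag}(|c|,0)\in S_+(M_2(\mathcal M),\tau\otimes\mathrm{Tr})$. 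The two identities $(v^*av)|c|=|c|b$ and $p_0av|c|=0$ found above are precisely the $(1,1)$- and $(2,1)$-entries of $\hat a\hat C=\hat C\hat b$, while the remaining entries vanish trivially; hence this matrix relation holds.

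Finally I would apply Lemma~\ref{l3} to $(\hat a,\hat b,\hat C)$ to obtain $\hat a^*\hat C=\hat C\hat b^*$ and read off its entries. The $(1,1)$-entry gives $(v^*a^*v)|c|=|c|b^*$, which after left multiplication by $v$ becomes $(vv^*)a^*c=cb^*$, i.e. $a^*c=cb^*+p_0a^*c$ since $vv^*=\mathbf 1-p_0$; the $(2,1)$-entry gives $p_0a^*v|c|=p_0a^*c=0$. Combining the two yields $a^*c=cb^*$, as required. I expect the only delicate points to be the verification that $v$ is an isometry with $c=v|c|$ and the bookkeeping of the matrix entries; the conceptual content is carried entirely by the dilation trick, which restores the normality lost under conjugation by an isometry.
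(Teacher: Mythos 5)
Your proof is correct, but it resolves the crux by a genuinely different device than the paper. Both arguments share the same skeleton: reduce to one of the two comparison cases by passing to adjoints, enlarge the polar partial isometry of $c$ by a kernel-to-kernel partial isometry so as to obtain an isometry, and then feed a positive intertwiner into Lemma~\ref{l3}. The difference is which case is handled directly. The paper treats $n(c^*)\preceq n(c)$: with $c=v|c|$ it takes $w^*w=n(c^*)$, $ww^*\le n(c)$ and forms $u=v^*+w$, which satisfies $u^*u=\mathbf 1$, $uc=|c|$, $u^*|c|=c$; since $x\mapsto uxu^*$ is a $\ast$-homomorphism when $u^*u=\mathbf 1$, the operator $uau^*$ is normal, the relation becomes $(uau^*)|c|=|c|b$, and Lemma~\ref{l3} applies inside $\mathcal M$ itself, with no amplification. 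You instead treat the mirror case $n(c)\preceq n(c^*)$, where the natural one-sided move produces the compression $v^*av$; you correctly diagnose the loss of normality and repair it with the Halmos dilation $V\in M_2(\mathcal M)$, invoking Lemma~\ref{l3} over $\bigl(M_2(\mathcal M),\tau\otimes\mathrm{Tr}\bigr)$. Your verifications are sound: the first column of $V\hat C$ is $(c,0)^{T}$, so $\hat a\hat C=\hat C\hat b$ amounts exactly to the two identities $v^*av|c|=|c|b$ and $p_0av|c|=0$ that you established, and the two rows of $\hat a^*\hat C=\hat C\hat b^*$ give $(\mathbf 1-p_0)a^*c=cb^*$ and $p_0a^*c=0$, whence $a^*c=cb^*$. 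Two remarks. First, your matrix computations tacitly identify $S\bigl(M_2(\mathcal M),\tau\otimes\mathrm{Tr}\bigr)$ with the $2\times2$ matrices over $S(\mathcal M,\tau)$ under entrywise strong operations; this is standard but should be stated. Second, the dilation, while a robust general-purpose fix, is avoidable even in your case: writing the polar decomposition also as $c=|c^*|u_0$, your isometry $v=u_0+w$ satisfies $cv^*=|c^*|$ and $|c^*|v=c$, so right multiplication of $ac=cb$ by $v^*$ gives $a|c^*|=|c^*|(vbv^*)$ with $vbv^*$ normal (here $v^*v=\mathbf 1$ is on the correct side); Lemma~\ref{l3} applied to $(a,vbv^*,|c^*|)$, followed by right multiplication by $v$, finishes the proof entirely within $\mathcal M$ --- which is precisely the economy that the paper's choice of case achieves automatically.
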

\begin{proof} We only consider the first case (the second case can be reduced to the first one by considering the triple $(b^*,a^*,c^*)$ instead of the triple $(a,b,c)$).

Let $c=v|c|$ be a polar decomposition of $c$ so that $v^*v=r(c)$ and $vv^*=r(c^*)$. Let $w$ be a partial isometry such that $w^*w=n(c^*)$ and $ww^*\leq n(c).$ Define an isometry $u=v^*+w$ (that is, $u^*u=1$). It is immediate that $u^*|c|=c$ and $uc=|c|.$ Thus,
$$(uau^*)\cdot |c|=ua\cdot c=u\cdot ac=u\cdot cb=uc\cdot b=|c|\cdot b.$$
Since $u^*u=1$ and since $a$ is normal, it follows that $uau^*$ is also normal. Applying Lemma \ref{l3} to the triple $(uau^*,b,|c|),$ we obtain
$$(ua^*u^*)\cdot |c|=|c|\cdot b^*.$$
Therefore,
$$a^*c=a^* \cdot u^*|c|=u^*\cdot (ua^*u^*)\cdot |c|=u^*\cdot |c|\cdot b^*=cb^*.$$
This completes the proof.
\end{proof}

We give now the proof of Theorem \ref{t2} in the  case of arbitrary semifinite von Neumann algebra $\mathcal{M}$ with a faithful normal semifinite trace $\tau$.

\begin{proof}[Proof of Theorem \ref{t2}] Let us suppose at first that $a,b\in\mathcal{M}.$ By \cite[Theorem 2.1.3]{Sak} there exist central projectors $z_1,z_2\in \mathcal{Z}(\mathcal{M})$ such that
$$z_1+z_2=\mathbf{1},\quad n(c^*)z_1\preceq n(c)z_1,\ n(c)z_2\preceq n(c^*)z_2.$$
It is immediate that
$$az_1\cdot cz_1=cz_1\cdot bz_1,\quad az_2\cdot cz_2=cz_2\cdot bz_2.$$
Clearly, $n(cz_k)=n(c)z_k$ and $n(c^*z_k)=n(c^*)z_k,$ $k=1,2,$ where the left hand side is taken in the algebra $z_k\mathcal{M}.$ Applying Lemma \ref{ber last lemma} to the triples $(az_1,bz_1,cz_1)$ and $(az_2,bz_2,cz_2),$ we obtain
$$a^*z_1\cdot cz_1=cz_1\cdot b^*z_1,\quad a^*z_2\cdot cz_2=cz_2\cdot b^*z_2.$$
Summing these equalities, we obtain that $a^*c=cb^*.$ This proves the assertion for the case $a,b\in\mathcal{M}.$

Let now  $a,b$ be an arbitrary normal operators in $S(\mathcal{M},\tau)$ and $ac = cb$. Let $q_n$ (respectively, $r_n$) be the spectral projection for $a$ (respectively, $b$) corresponding to the set $\{z: |z|\leq n\}$. It is clear that $\{q_n\}$ and $\{r_n\}$  are an increasing sequences of projections with $\sup_{n\geq 1}q_n=\mathbf{1}$ and $\sup_{n\geq1}r_n=\mathbf{1}.$ In addition (see e.g. \cite[Ch. 13, Theorems 13.24, 13.33]{R}),
$$aq_n=q_na,\quad a^*q_n=q_na^*,\quad br_n=r_nb,\quad b^*r_n=r_nb^*,\quad n\in\mathbb{N}.$$
Multiplying the equality $ac=cb$ by $q_n$ on the left and by $r_n$ on the right, we obtain
$$(q_na)\cdot (q_ncr_n)=(q_ncr_n)\cdot (r_nb),\quad n\in\mathbb{N}.$$
Clearly, $q_na\in\mathcal{M}$ and $r_nb\in\mathcal{M}$ are normal operators for every $n\in\mathbb{N}.$ It follows from the preceding paragraph that
$$q_n\cdot a^*c\cdot r_n=(q_na)^*\cdot (q_ncr_n)=(q_ncr_n)\cdot (r_nb)^*=q_n\cdot cb^*\cdot r_n .$$
Thus,
$$q_n(a^*c-cb^*)r_n=0,\quad n\in\mathbb{N}.$$
Since  $a,b\in S(\mathcal{M},\tau),$ it follows that $\tau(1-q_n)\rightarrow 0,$ $\tau(1-r_n)\rightarrow 0$ as $n\to\infty.$ Thus  $q_n \rightarrow \mathbf{1},$ $r_n \rightarrow \mathbf{1}$ in measure. Therefore, for every $x\in S(\mathcal{M},\tau),$ we have $q_nxr_n\to x$ in measure as $n\to\infty.$ Taking $x=a^*c-cb^*,$ we complete the proof.
\end{proof}

\section{The Fuglede-Putnam theorem in the $\ast$-algebra $LS(\mathcal{M})$}\label{locally measurable section}

Lemma \ref{l5} below is the key tool used to extend Fuglede-Putnam theorem from $\tau-$measurable operators to measurable ones.

\begin{lem}\label{l5}
Let $\mathcal{M}$ be a semifinite von Neumann algebra and let $q \in \mathcal{P}(\mathcal{M})$ be a finite projector. Then there exists partition of unity  $\{z_i\}_{i\in I} \subset \mathcal{P}(\mathcal{Z}(\mathcal{M}))$, such that  every von Neumann algebra  $z_i\mathcal{M}, \ i \in I$, has  a faithful normal semifinite trace $\tau_i$ with $\tau_i(z_iq)<\infty$.
\end{lem}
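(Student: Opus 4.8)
The plan is to fix one faithful normal semifinite trace $\tau$ on $\mathcal{M}$ (which exists precisely because $\mathcal{M}$ is semifinite) and to build the partition $\{z_i\}$ out of the central projections on which $q$ carries finite $\tau$-weight; on each piece I will simply take $\tau_i$ to be the restriction $\tau|_{z_i\mathcal{M}}$, which is automatically a faithful normal semifinite trace on the reduced algebra $z_i\mathcal{M}$. Thus the whole statement reduces to one claim: writing $z(q)$ for the central support of $q$, the central projections $z\le z(q)$ with $\tau(zq)<\infty$ have supremum $z(q)$. Granting this, a routine exhaustion argument (Zorn's lemma applied to maximal orthogonal families) inside the abelian algebra $\mathcal{Z}(\mathcal{M})z(q)$ produces an orthogonal family $\{z_i\}_{i\in I}$ of central projections with $\sum_i z_i=z(q)$ and $\tau(z_iq)<\infty$ for each $i$; adjoining the central projection $\mathbf{1}-z(q)$, on which $q$ vanishes so that $\tau((\mathbf{1}-z(q))q)=0$, then completes the partition of unity.

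The heart of the matter is this claim, equivalently that the normal weight $\nu(z)=\tau(zq)$ on $\mathcal{Z}(\mathcal{M})z(q)$ is semifinite; this is the one place where the finiteness of $q$ is used. I would pass to the reduced algebra $\mathcal{N}=q\mathcal{M}q$, which is a finite von Neumann algebra whose centre satisfies $\mathcal{Z}(\mathcal{N})\cong\mathcal{Z}(\mathcal{M})z(q)$ via $z\mapsto zq$. The restriction of $\tau$ to $\mathcal{N}_+$ is a faithful normal semifinite trace, and on a finite von Neumann algebra every such trace factors through the faithful normal centre-valued trace $T\colon\mathcal{N}\to\mathcal{Z}(\mathcal{N})$ as $\tau|_{\mathcal{N}}=\hat\tau\circ T$ for a suitable faithful normal semifinite trace $\hat\tau$ on $\mathcal{Z}(\mathcal{N})$. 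Since $T(q)=q$ is the unit of $\mathcal{N}$, one gets $\tau(zq)=\hat\tau(T(zq))=\hat\tau(z)$ for central $z\le z(q)$, so $\nu=\hat\tau$, which is semifinite because any faithful normal semifinite trace on an abelian von Neumann algebra is.

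I expect the factorisation $\tau|_{\mathcal{N}}=\hat\tau\circ T$ through the centre-valued trace to be the main technical obstacle, since it is exactly the device that converts the order-theoretic finiteness of $q$ into local $\tau$-integrability of $q$ over the centre. If one wishes to avoid quoting this factorisation, the same conclusion can be reached by hand: given a nonzero central $z_0\le z(q)$, use semifiniteness of $\tau$ to pick a nonzero $e\le z_0q$ with $\tau(e)<\infty$; then $T(e)\in\mathcal{Z}(\mathcal{N})_+$ has central support equal to that of $e$, so for small $\delta>0$ the central projection $g:=\chi_{[\delta,\infty)}(T(e))$ is nonzero and satisfies $\delta\,g\le T(e)g=T(eg)$, whence $\tau(g)\le\delta^{-1}\tau(eg)\le\delta^{-1}\tau(e)<\infty$. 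This exhibits a nonzero central subprojection of $z_0$ of finite $q$-weight, and maximality of an orthogonal family of such projections forces their supremum to be $z(q)$, establishing the semifiniteness of $\nu$. Either route reduces the lemma, once $\nu$ is known to be semifinite, to the standard maximal orthogonal family argument in the abelian centre described in the first paragraph.
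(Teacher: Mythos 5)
Your argument is sound in substance, but it follows a genuinely different route from the paper's. The paper never invokes a global scalar trace on $\mathcal{M}$: it represents $\mathcal{Z}(\mathcal{M})$ as $L^\infty(\Omega,\Sigma,\mu)$ with $\mu$ having the direct sum property, takes the \emph{extended} (unbounded-valued) centre-valued trace $T\colon\mathcal{M}_+\to L^+(\Omega,\Sigma,\mu)$ of the semifinite algebra $\mathcal{M}$ (quoting Takesaki), uses finiteness of $q$ to ensure $T(q)<+\infty$ almost everywhere, and then \emph{manufactures} the traces $\tau_i$ as compositions $\nu_j\circ T$, the partition of unity being obtained by intersecting the level sets $\{n-1\le T(q)<n\}$ with a partition of the centre on which the trace $\nu$ is finite. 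You instead localize a pre-existing faithful normal semifinite scalar trace $\tau$ on $\mathcal{M}$ (quoting the existence theorem for such traces on semifinite algebras), and the finiteness of $q$ enters through the \emph{bounded} centre-valued trace of the finite corner $q\mathcal{M}q$; your partition comes from a Zorn exhaustion rather than from level sets. The paper's route constructs the $\tau_i$ explicitly and needs no existence theorem for scalar traces; your route avoids both the measure-theoretic representation of the centre and the extended centre-valued trace, using only the classical centre-valued trace of a finite algebra. Your reduction to semifiniteness of $\nu(z)=\tau(zq)$, the identification $\mathcal{Z}(q\mathcal{M}q)=\mathcal{Z}(\mathcal{M})q\cong\mathcal{Z}(\mathcal{M})z(q)$, and the exhaustion argument are all correct.

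One step in your ``by hand'' variant still needs justification: from $\delta g\le T(eg)$ you pass to $\tau(g)\le\delta^{-1}\tau(eg)$, which tacitly uses the sub-invariance $\tau(T(x))\le\tau(x)$ for $x\in(q\mathcal{M}q)_+$. This is precisely (one half of) the factorisation you set out to avoid, so as written route B silently leans on route A. The inequality is true and provable by standard means --- $T(x)$ lies in the norm-closed convex hull of $\{uxu^*:u \mbox{ unitary in } q\mathcal{M}q\}$ by Dixmier averaging in the finite algebra $q\mathcal{M}q$, each such average has $\tau$-value equal to $\tau(x)$ when $\tau(x)<\infty$, and normal weights are lower semicontinuous --- but a sentence to this effect must be added. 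With that supplement your proof is complete.
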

\begin{proof}
It is well known that let $\mathcal{M}$ a commutative von Neumann algebra $\mathcal{Z}(\mathcal{M})$ is $\ast$-isomorphic to the $\ast$-algebra
$L^\infty(\Omega,\Sigma,\mu)$ of all essentially bounded
measurable complex-valued functions defined on a measure space
$(\Omega,\Sigma,\mu)$ with the measure $\mu$ satisfying the direct
sum property (we identify functions that are equal almost
everywhere) (see e.g. \cite[Ch. 7, \S 7.3]{Dixmier}).  The direct sum property of a measure $\mu$ means that the Boolean algebra of all projections of the $*$-algebra
$L^\infty(\Omega,\Sigma,\mu)$ is order complete, and for any
non-zero $ p\in \mathcal{P}(\mathcal{M})$ there exists a non-zero
projection $r\leq p$ such that $\mu(r)<\infty$.  The direct sum property
of a measure $\mu$ is equivalent to the fact that the functional
$\nu(f):=\int_\Omega f\,d\mu$ is a semi-finite normal faithful trace on the algebra $L^\infty(\Omega,\Sigma,\mu)$. Therefore there exists partition of unity  $\{r_j\}_{j\in j} \subset \mathcal{P}(L^\infty(\Omega,\Sigma,\mu))$, such that  $\nu_j(f) = \nu(r_j f) $ is faithful normal finite trace on $r_j L^\infty(\Omega,\Sigma,\mu)$ for every $j \in J$.

Let $\varphi$ be a $\ast$-isomorphism from $\mathcal{Z}(\mathcal{M})$
onto the $*$-algebra $L^\infty(\Omega,\Sigma,\mu)$.  Denote by
$L^+(\Omega,\, \Sigma,\, m)$ the set of all measurable real-valued
functions defined on $(\Omega,\Sigma,\mu)$ and taking values in
the extended half-line $[0,\, \infty]$ (functions that are equal
almost everywhere are identified).

By \cite[Ch. V, \S 2, Theor. 2.34 and Prop. 2.35]{Tak} there exits a faithful semifinite normal extended center valued trace $T$
$$
T\colon
\mathcal{M}_+\to L^+(\Omega,\Sigma,\mu)
$$
such that $\mu(\{\omega \in \Omega:T(q)(\omega)=+\infty\})=0$.
Thus characteristic functions $q_n=\chi_{A_n}$ corresponding to sets $A_n=\{\omega \in \Omega:n-1\leq T(q)(\omega)< n\},$ $n\in\mathbb{N},$  is partition of the unit element $\chi_{\Omega}$  of Boolean algebra $\mathcal{P}(L^\infty(\Omega,\Sigma,\mu))$. In addition
$$
T(q\varphi^{-1}(q_n)) = \varphi^{-1}(q_n) T(q) \leq nq_n
$$
for all $n \in \mathbb N$.

It is clear that $\{z_n^j=\varphi^{-1}(r_j q_n), \ j \in J, \ n \in \mathbb N\}$ is partition of unity  in  $\mathcal{P}(\mathcal{Z}(\mathcal{M}))$. In addition, the functional $\tau_{j,n}:z_n^j\mathcal{M}_+\to[0,\infty],$ given by the formula
$$\tau_{j,n}(x) = \nu_j(T(x)), \ x \in  z_n^j\mathcal{M}_+$$
is a  faithful normal finite trace on $z_n^j\mathcal{M}$.
In particular,
$$\tau_{j,n}(z_n^jq)=\nu_j(T(z_n^jq))\leq n\nu_j(\varphi^{-1}(q_n)r_j)\leq n\nu_j(r_j)<\infty\mbox{ for all }j \in J,\quad n\in \mathbb{N}.$$
Setting $i=(j,n)$ and $I=J\times\mathbb{N},$ we complete the proof.
\end{proof}

\begin{lem}\label{central partition} Let $\mathcal{M}$ be a von Neumann algebra and let $\{z_i\}_{i\in I}\subset\mathcal{Z}(\mathcal{M})$ be a partition of unity. If $x\in LS(\mathcal{M})$ is such that $xz_i=0$ for every $i\in I,$ then $x=0.$
\end{lem}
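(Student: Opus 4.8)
The plan is to prove $x=0$ by showing that its right support $r(x)$ vanishes. The point is that for $x\in LS(\mathcal{M})$ one has $x=0$ if and only if $x^*x=0$, and this in turn holds exactly when $r(x)=\mathbf{1}-n(x)=0$. Moreover $r(x)$ is a genuine projection of $\mathcal{M}$: by the polar decomposition $x=u|x|$ with $u\in\mathcal{M}$ and $|x|\in LS_+(\mathcal{M})$, and since the spectral family of $|x|$ (equivalently of $x^*x$) lies in $\mathcal{M}$, we have $r(x)=r(x^*x)\in\mathcal{P}(\mathcal{M})$. Thus the whole statement reduces to a computation with projections inside the complete lattice $\mathcal{P}(\mathcal{M})$.

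First I would exploit the centrality of the $z_i$. Since each $z_i\in\mathcal{Z}(\mathcal{M})$ and multiplication in $LS(\mathcal{M})$ is associative, the hypothesis $xz_i=0$ gives $(x^*x)z_i=x^*(xz_i)=x^*\cdot 0=0$. Because $x^*x\geq 0$, the relation $(x^*x)z_i=0$ forces the range of $z_i$ into the kernel of $x^*x$, that is $z_i\leq n(x^*x)=n(x)=\mathbf{1}-r(x)$. Equivalently, $z_i\,r(x)=0$, i.e. $r(x)\leq z_i^{\perp}$ for every $i\in I$.

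It remains to combine these inequalities over the whole partition. Since $\mathcal{P}(\mathcal{M})$ is order complete and $\{z_i\}_{i\in I}$ is a partition of unity, we have $\bigvee_{i\in I}z_i=\mathbf{1}$, whence by De Morgan $\bigwedge_{i\in I}z_i^{\perp}=\big(\bigvee_{i\in I}z_i\big)^{\perp}=\mathbf{1}^{\perp}=0$. Taking the infimum over $i$ of the inequalities $r(x)\leq z_i^{\perp}$ yields $r(x)\leq 0$, so $r(x)=0$ and therefore $x=0$.

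I expect the genuinely delicate points to be bookkeeping rather than conceptual. One must make sure that $r(x)$ really is a projection in $\mathcal{M}$ — this is the only place where local measurability enters, through the polar decomposition with $u\in\mathcal{M}$ and the membership of the spectral projections of $|x|$ in $\mathcal{M}$. One must also justify passing from the individual inequalities $r(x)\leq z_i^{\perp}$ to their infimum for a possibly uncountable index set $I$, which is exactly the order completeness of $\mathcal{P}(\mathcal{M})$ together with the De Morgan identity for arbitrary families. Notably, no recourse to the approximating central sequence from the definition of $LS(\mathcal{M})$ is needed; the centrality of the $z_i$ does all the real work.
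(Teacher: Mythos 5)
Your proof is correct and follows essentially the same route as the paper: from $xz_i=0$ you deduce $z_i\leq n(x)$ (equivalently $r(x)\leq z_i^{\perp}$), and then use completeness of $\mathcal{P}(\mathcal{M})$ together with $\sup_{i\in I}z_i=\mathbf{1}$ to conclude $r(x)=0$, i.e. $x=0$. The detour through $x^*x$ and the explicit verification that $r(x)\in\mathcal{P}(\mathcal{M})$ are harmless elaborations of what the paper treats as immediate.
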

\begin{proof} Since $z_i \leq n(x)$  for all   $i\in I,$ it follows that $\mathbf{1}=\sup_{i\in I}z_i \leq n(x).$ Thus $n(x)=\mathbf{1},$ i.e. $x=0.$
\end{proof}

The following lemma extends the result of Theorem \ref{t2} to the setting of measurable operators.

\begin{lem}\label{semifinite measurable} Let $\mathcal{M}$ be semifinite von Neumann algebra and let $a,b,c\in S(\mathcal{M}).$ If $a$ and $b$ are normal and if $ac=cb,$ then $a^*c=cb^*.$
\end{lem}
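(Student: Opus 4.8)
The plan is to reduce the statement to Theorem \ref{t2} by cutting $\mathcal{M}$ along a central partition of unity on each piece of which $a,b,c$ become genuinely $\tau$-measurable. The engine producing such a partition is Lemma \ref{l5}, and the reassembly is carried out via Lemma \ref{central partition}.

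First I would extract from the measurability of $a,b,c$ a single finite projection controlling all three. Since $a,b,c\in S(\mathcal{M})$, the criterion recalled in the preliminaries furnishes $\lambda_a,\lambda_b,\lambda_c>0$ for which $q_a=e_{\lambda_a}^\bot(|a|)$, $q_b=e_{\lambda_b}^\bot(|b|)$, $q_c=e_{\lambda_c}^\bot(|c|)$ all lie in $\mathcal{P}_{fin}(\mathcal{M})$. Set $q=q_a\vee q_b\vee q_c$; as the join of finitely many finite projections, $q$ is again finite. Applying Lemma \ref{l5} to $q$ yields a central partition of unity $\{z_i\}_{i\in I}\subset\mathcal{P}(\mathcal{Z}(\mathcal{M}))$ together with faithful normal semifinite traces $\tau_i$ on $z_i\mathcal{M}$ satisfying $\tau_i(z_iq)<\infty$.

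The second step is to check that on each piece the hypotheses of Theorem \ref{t2} hold. Because $z_i$ is central, $|az_i|=|a|z_i$, so $e_{\lambda_a}^\bot(|az_i|)=q_az_i\leq qz_i$ has finite $\tau_i$-trace; hence $az_i\in S(z_i\mathcal{M},\tau_i)$, and likewise $bz_i,cz_i\in S(z_i\mathcal{M},\tau_i)$. Centrality also makes $az_i$ and $bz_i$ normal, and multiplying $ac=cb$ by $z_i$ gives $(az_i)(cz_i)=acz_i=cbz_i=(cz_i)(bz_i)$. Theorem \ref{t2}, applied in the semifinite algebra $z_i\mathcal{M}$ equipped with $\tau_i$, then yields $(az_i)^*(cz_i)=(cz_i)(bz_i)^*$; since $(az_i)^*=a^*z_i$, this reads $(a^*c-cb^*)z_i=0$ for every $i\in I$.

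Finally, as $a^*c-cb^*\in S(\mathcal{M})\subset LS(\mathcal{M})$ and it annihilates every member of the central partition, Lemma \ref{central partition} forces $a^*c-cb^*=0$, which is the desired conclusion. The only genuinely delicate point is the first step: the operators are merely measurable (their spectral tails are finite projections) rather than $\tau$-measurable (finite-trace tails), so Theorem \ref{t2} cannot be invoked globally. Lemma \ref{l5} is precisely what repairs this, supplying, piecewise over the centre, traces for which the common finite projection $q$ — and hence all three spectral tails $q_a,q_b,q_c$ — acquires finite trace; everything else is routine bookkeeping with central cut-downs.
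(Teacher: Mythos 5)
Your proof is correct and follows essentially the same route as the paper's: apply Lemma \ref{l5} to the join of the three finite spectral tail projections, verify $\tau_i$-measurability and normality of the central cut-downs so that Theorem \ref{t2} applies in each $z_i\mathcal{M}$, and reassemble with Lemma \ref{central partition}. The only cosmetic difference is that you use separate thresholds $\lambda_a,\lambda_b,\lambda_c$ where the paper takes a single $n$ large enough for all three operators.
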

\begin{proof} Choose $n$ so large that projections $e_{|a|}(n,+\infty),$ $e_{|b|}(n,+\infty)$ and $e_{|c|}(n,+\infty)$ are finite. Let $q$ be a finite projection given by the formula
$$q=e_{|a|}(n,+\infty)\vee e_{|b|}(n,+\infty)\vee e_{|c|}(n,+\infty).$$
Let $\{z_i\}_{i\in I}$ be the partition of unity constructed in Lemma \ref{l5}. We have
$$az_i\cdot cz_i=cz_i\cdot bz_i,\quad i\in I.$$
It follows from Lemma \ref{l5} that, for a given $i\in I,$
$$\tau_i(e_{|a|}(n,+\infty)z_i),\tau_i(e_{|b|}(n,+\infty)z_i),\tau_i(e_{|c|}(n,+\infty)z_i)<\infty.$$
A standard argument yields
$$e_{|a|}(n,+\infty)z_i=e_{|az_i|}(n,+\infty),$$
where the right hand side is taken in the algebra $z_i\mathcal{M}.$ It follows that $az_i,$ $bz_i$ and $cz_i$ are $\tau_i-$measurable operators for every $i\in I.$ Theorem \ref{t2} implies that
$$a^*z_i\cdot cz_i=cz_i\cdot b^*z_i.$$
The assertion follows now from Lemma \ref{central partition}.
\end{proof}

Lemma \ref{semifinite locally measurable} extends Fuglede-Putnam theorem to the setting of locally measurable operators affiliated with a semifinite von Neumann algebra $\mathcal{M}.$

\begin{lem}\label{semifinite locally measurable} Let $\mathcal{M}$ be semifinite von Neumann algebra and let $a,b,c\in LS(\mathcal{M}).$ If $a$ and $b$ are normal and if $ac=cb,$ then $a^*c=cb^*.$
\end{lem}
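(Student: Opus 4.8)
The plan is to reduce the locally measurable case to the measurable case already established in Lemma~\ref{semifinite measurable}, using the defining property of $LS(\mathcal{M})$: for each of $a,b,c$ there is an increasing sequence of central projections converging to $\mathbf{1}$ that cuts the operator down into $S(\mathcal{M})$. First I would invoke the definition of local measurability to produce, for $a$, a sequence $\{z_n^a\}$ of central projections with $z_n^a\uparrow\mathbf{1}$ and $az_n^a\in S(\mathcal{M})$, and likewise sequences $\{z_n^b\},\{z_n^c\}$ for $b$ and $c$. Since the family of central projections is a complete lattice and central projections commute with everything, I would form the common cut $z_n=z_n^a\wedge z_n^b\wedge z_n^c$, obtaining a single increasing sequence $z_n\uparrow\mathbf{1}$ of central projections with $az_n,\ bz_n,\ cz_n\in S(\mathcal{M})$ simultaneously.

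The next step is to check that passing to the corner $z_n\mathcal{M}$ preserves all the hypotheses. Because $z_n$ is central, multiplication by $z_n$ is a $\ast$-homomorphism on $LS(\mathcal{M})$, so $az_n$ and $bz_n$ remain normal (one has $(az_n)^*(az_n)=a^*az_n=aa^*z_n=(az_n)(az_n)^*$ using $z_n=z_n^*=z_n^2$ and centrality). Multiplying the identity $ac=cb$ by $z_n$ and using centrality to move the projection across the strong products gives
\begin{equation}\label{cut-identity}
(az_n)(cz_n)=acz_n=cbz_n=(cz_n)(bz_n).
\end{equation}
Thus $az_n,bz_n,cz_n$ are measurable operators affiliated with the semifinite algebra $z_n\mathcal{M}$, they satisfy the normality and intertwining hypotheses in that corner, and Lemma~\ref{semifinite measurable} applies to yield
\begin{equation}\label{cut-conclusion}
(az_n)^*(cz_n)=(cz_n)(bz_n)^*,\qquad\text{i.e.}\qquad a^*cz_n=cb^*z_n,
\end{equation}
where again centrality lets me rewrite the adjoints and collect the projections.

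Finally I would assemble these corner-wise conclusions into the global statement. Setting $x=a^*c-cb^*\in LS(\mathcal{M})$, equation~\eqref{cut-conclusion} says precisely that $xz_n=0$ for every $n$. Extending $\{z_n\}$ to a partition of unity (or simply noting $\sup_n z_n=\mathbf{1}$ and applying the kernel argument directly) I would invoke Lemma~\ref{central partition} to conclude $x=0$, that is $a^*c=cb^*$. The main obstacle, and the only place requiring genuine care, is the bookkeeping of equations~\eqref{cut-identity} and~\eqref{cut-conclusion}: one must be sure that multiplication by the central projection $z_n$ genuinely commutes with the \emph{strong} products and adjoints defining the algebra operations in $LS(\mathcal{M})$, and that the spectral/normality structure of $a$ restricts correctly to the corner $z_n\mathcal{M}$ so that Lemma~\ref{semifinite measurable} is legitimately applicable there. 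Once this compatibility of central truncation with the $\ast$-algebra operations is confirmed, the rest is a routine exhaustion argument.
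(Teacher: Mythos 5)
Your proof is correct and follows essentially the same route as the paper: cut $a,b,c$ down by central projections into $S(\mathcal{M})$, apply Lemma~\ref{semifinite measurable} on each piece, and patch the conclusions together via Lemma~\ref{central partition}. The only difference is cosmetic --- you merge the three sequences into a single diagonal sequence $z_n=z_n^a\wedge z_n^b\wedge z_n^c$ (valid, since the meet of increasing sequences of commuting projections still increases to $\mathbf{1}$), whereas the paper keeps a triple-indexed family $P_n=p_kq_lr_m$ and eliminates the indices one at a time using successive differences $r_m-r_{m-1}$ as the partition of unity.
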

\begin{proof} By the (constructive) definition of the algebra $LS(\mathcal{M}),$ there exist central projections $\{p_k\}_{k\geq1},$ $\{q_l\}_{l\geq1}$ and $\{r_m\}_{m\geq1}$ such that $p_k\uparrow \mathbf{1},$ $q_l\uparrow \mathbf{1}$ and $r_m\uparrow \mathbf{1}$ and such that
$$ap_k,bq_l,cr_m\in S(\mathcal{M}),\quad k,l,m\geq1.$$
Denote the triple $(k,l,m)$ by $n$ and set $P_n=p_kq_lr_m.$ Since
$$aP_n\cdot cP_n=cP_n\cdot bP_n,\quad n\in\mathbb{N}^3,$$
it follows from Lemma \ref{semifinite measurable} that
$$a^*P_n\cdot cP_n=cP_n\cdot b^*P_n,\quad n\in\mathbb{N}^3.$$
In other words (here, we let $r_0=0$),
$$(a^*c-cb^*)p_kq_l\cdot (r_m-r_{m-1})=0,\quad m\in\mathbb{N}.$$
Since $\{r_m-r_{m-1}\}_{m\geq1}$ is a partition of unity which consists of central projections, it follows from Lemma \ref{central partition} that
$$(a^*c-cb^*)p_kq_l=0,\quad k,l\in\mathbb{N}.$$
Repeating the argument for $l$ and, after that, for $k,$ we complete the proof.
\end{proof}

The following assertion can be found in \cite{ACM} (see Theorem 1 there). We provide a short proof for convenience of the reader.

\begin{lem}\label{purely infinite} Let $\mathcal{M}$ be a purely infinite von Neumann algebra and let $a,b,c\in LS(\mathcal{M}).$ If $a$ and $b$ are normal and if $ac=cb,$ then $a^*c=cb^*.$
\end{lem}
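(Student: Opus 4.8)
The plan is to reduce the purely infinite case to the semifinite case that has already been established in Lemma \ref{semifinite locally measurable}. The key structural fact about a purely infinite von Neumann algebra $\mathcal{M}$ is that, although $\mathcal{M}$ itself admits no nonzero normal semifinite trace, the algebra $LS(\mathcal{M})$ of locally measurable operators has a much simpler description than in the semifinite case. In fact, for a purely infinite (type III) algebra one expects that every locally measurable operator is already \emph{bounded} modulo a central decomposition, or more precisely that the structure of $LS(\mathcal{M})$ collapses so that the Fuglede-Putnam statement can be deduced directly from the classical bounded Fuglede-Putnam theorem together with the spectral-projection commutation already exploited in the proof of Theorem \ref{t2}.

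Concretely, I would first recall (this is the content of the cited \cite[Theorem 1]{ACM}) the description of $LS(\mathcal{M})$ when $\mathcal{M}$ is type III: because $\mathcal{M}$ has no nontrivial finite projections, the only way an operator can be locally measurable is through a central reduction, and on each central piece the operator is affiliated in a controlled way. The strategy is then to produce a partition of unity $\{z_i\}_{i\in I}\subset\mathcal{P}(\mathcal{Z}(\mathcal{M}))$ such that on each $z_i\mathcal{M}$ the three operators $az_i,bz_i,cz_i$ become bounded (i.e. lie in $z_i\mathcal{M}$). Once this reduction is in place, on each central piece the hypothesis reads $az_i\cdot cz_i=cz_i\cdot bz_i$ with all operators in the von Neumann algebra $z_i\mathcal{M}$ and with $az_i,bz_i$ normal, so the classical bounded Fuglede-Putnam theorem (Fuglede \cite{F}, Putnam \cite{P}) applies verbatim to give $a^*z_i\cdot cz_i=cz_i\cdot b^*z_i$.

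Having the conclusion $(a^*c-cb^*)z_i=0$ for every $i\in I$, I would then invoke Lemma \ref{central partition}: since $\{z_i\}_{i\in I}$ is a partition of unity consisting of central projections and $a^*c-cb^*\in LS(\mathcal{M})$, the vanishing of $(a^*c-cb^*)z_i$ for all $i$ forces $a^*c-cb^*=0$, which is exactly the desired identity $a^*c=cb^*$. This is the same finishing move used in Lemmas \ref{semifinite measurable} and \ref{semifinite locally measurable}, so the architecture of the argument mirrors what precedes it.

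The main obstacle I anticipate is justifying the central reduction to the bounded case—that is, showing that for a purely infinite $\mathcal{M}$ one can genuinely split $a$, $b$, and $c$ into bounded pieces over a central partition while simultaneously preserving normality of the reductions $az_i$, $bz_i$ and the intertwining relation. The normality and the relation $az_i\cdot cz_i=cz_i\cdot bz_i$ pass through easily because $z_i$ is central (so $z_i$ commutes with $a,b,c$ and with the adjoints via spectral calculus), but the hard structural input is the claim that in the type III setting every locally measurable operator is centrally bounded. Rather than reprove this, I would cite the known description of $LS(\mathcal{M})$ for properly infinite algebras and use it as a black box, which is consistent with the remark that this lemma already appears in \cite{ACM}; the contribution here is merely to assemble the central reduction and the classical theorem into a clean short proof.
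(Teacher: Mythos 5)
Your proposal is correct and follows essentially the same route as the paper: reduce to bounded pieces over a central partition, apply the classical Fuglede--Putnam theorem on each piece, and finish with Lemma \ref{central partition}. The only difference is that the ``hard structural input'' you propose to cite as a black box is in fact immediate from the definitions --- a purely infinite algebra has no nonzero finite projections, so $S(\mathcal{M})=\mathcal{M}$, and the constructive definition of $LS(\mathcal{M})$ directly supplies central projections $p_k,q_l,r_m\uparrow\mathbf{1}$ with $ap_k,bq_l,cr_m\in\mathcal{M}$ --- which is exactly how the paper obtains the central reduction (using the products $p_kq_lr_m$ and iterating the partition argument, rather than a single partition of unity).
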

\begin{proof} Recall that $S(\mathcal{M})=\mathcal{M}.$ Choose central projections $\{p_k\}_{k\geq1},$ $\{q_l\}_{l\geq1}$ and $\{r_m\}_{m\geq1}$ such that $p_k\uparrow \mathbf{1},$ $q_l\uparrow \mathbf{1}$ and $r_m\uparrow \mathbf{1}$ and such that
$$ap_k,bq_l,cr_m\in\mathcal{M},\quad k,l,m\geq1.$$
Denote the triple $(k,l,m)$ by $n$ and let $P_n=p_kq_lr_m.$ We have
$$aP_n\cdot cP_n=cP_n\cdot bP_n,\quad n\in\mathbb{N}^3.$$
By the classical Fuglede-Putnam theorem, we have
$$a^*P_n\cdot cP_n=cP_n\cdot b^*P_n,\quad n\in\mathbb{N}^3.$$
The same argument as in Lemma \ref{semifinite locally measurable} yields the assertion.
\end{proof}

\begin{proof}[Proof of Theorem \ref{t3}] It is well known that for every von Neumann algebra $\mathcal{M}$ there exist central projectors  $z_1,z_2\in\mathcal{Z}(\mathcal{M})$ such that $z_1+z_2=\mathbf{1},$ $\mathcal{M}z_1$ is the semifinite von Neumann algebra and $\mathcal{M}z_2$ is the purely infinite von Neumann algebra (see, for example, \cite[Ch.2, \S 2.2]{Sak}). We have
$$az_k\cdot cz_k=cz_k\cdot bz_k,\quad k=1,2.$$
Lemmas \ref{semifinite locally measurable} and \ref{purely infinite} imply that
$$a^*z_k\cdot cz_k=cz_k\cdot b^*z_k,\quad k=1,2.$$
Summing these equalities, we complete the proof.
\end{proof}

We need the following useful property of locally measurable operators.

\begin{lem}\label{l0} Let $\mathcal{M}$ be a von Neumann algebra and let $x\in LS(\mathcal{M}).$ Let $\{p_n\}_{n\geq1}\subset \mathcal P(\mathcal{M})$ be such that $p_n\uparrow \mathbf{1}$. If $p_nxp_n=0$ for every $n\geq1,$ then $x=0.$
\end{lem}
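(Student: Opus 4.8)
The plan is to avoid any limiting or topological argument and to reduce the statement to the elementary support-projection reasoning already employed in the proof of Lemma~\ref{central partition}. A topological route (trying to prove $p_nxp_n\to x$ and conclude $x=\lim 0=0$) looks tempting but is treacherous: the $p_n$ need not commute with $x$, nor with the spectral projections of $|x|$, and $p_n\to\mathbf 1$ may fail in the measure topology when the complements $p_n^\perp$ have infinite trace. The idea instead is to telescope the increasing sequence $\{p_n\}$ into a partition of unity and to read off kernel/range projections directly, keeping the whole argument algebraic inside $LS(\mathcal M)$.

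First I would set $p_0=\mathbf 0$ and introduce the mutually orthogonal projections $d_l=p_l-p_{l-1}\in\mathcal P(\mathcal M)$, $l\ge 1$. Since $\sum_{l=1}^{L}d_l=p_L\uparrow\mathbf 1$, these form a partition of unity, so $\sup_l d_l=\mathbf 1$; moreover $p_n d_l=d_l$ whenever $n\ge l$. The key step is then purely formal inside the $\ast$-algebra $LS(\mathcal M)$: for a fixed $l$ and any $n\ge l$, using $p_n d_l=d_l$ together with the hypothesis $p_nxp_n=0$, one obtains
$$p_n(xd_l)=(p_nxp_n)d_l=0.$$
Thus left multiplication by each $p_n$ with $n\ge l$ annihilates the element $xd_l\in LS(\mathcal M)$.

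Next I would invoke the standard fact that, for a projection $p\in\mathcal P(\mathcal M)$ and $y\in LS(\mathcal M)$, the relation $p\cdot y=0$ forces $p\le\mathbf 1-l(y)$ (the complement of the left support of $y$). Applying this with $y=xd_l$ gives $p_n\le\mathbf 1-l(xd_l)$ for every $n\ge l$; taking the supremum over $n$ and using $\sup_n p_n=\mathbf 1$ yields $l(xd_l)=0$, that is, $xd_l=0$ for every $l$. Finally, $xd_l=0$ means $d_l\le n(x)$ for every $l$, whence $\mathbf 1=\sup_l d_l\le n(x)$, so $n(x)=\mathbf 1$ and $x=0$, exactly as in the concluding line of Lemma~\ref{central partition}.

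I do not expect a serious obstacle here once the telescoping device is in place; the only point requiring care is the step $p_n(xd_l)=(p_nxp_n)d_l$, which relies on $p_nd_l=d_l$ for $n\ge l$, and the two support characterizations ($p\,y=0\iff p\le\mathbf 1-l(y)$ and $y\,d=0\iff d\le n(y)$), which are routine consequences of the description of strong products and supports in~\cite[\S\S\,2.2,2.3]{MCh}. The conceptual hurdle is simply recognizing that the non-commutativity of $p_n$ with $x$ must be sidestepped by the partition-of-unity trick rather than confronted by a convergence argument.
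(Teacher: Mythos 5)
Your proof is correct and is essentially the paper's own argument in mirror image: the paper fixes $m$, notes $p_mxp_n=p_m\cdot(p_nxp_n)=0$ for $n\geq m$, deduces $r(p_mx)=0$ from $p_n\uparrow\mathbf{1}$ (hence $p_mx=0$), and then runs the same support argument on $x^*p_m=0$ to conclude $x=0$ --- exactly your absorption-plus-support scheme with the roles of left and right interchanged. The only genuine difference is your telescoping into $d_l=p_l-p_{l-1}$, which is an unnecessary detour: applying your key step with $p_l$ in place of $d_l$ (using $p_np_l=p_l$ for $n\geq l$) already gives $xp_l=0$, and then $p_l\leq n(x)$ together with $p_l\uparrow\mathbf{1}$ finishes the proof.
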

\begin{proof} Fix $m\in\mathbb{N}.$ For every $n\geq m,$ we have
$$p_mxp_n=p_m\cdot p_nxp_n=0.$$
Thus, $p_n\leq \mathbf{1} - r(p_mx)$ for every $n\geq1.$ Since $p_n\uparrow \mathbf{1},$ it follows that $r(p_mx)=0$ and, therefore, $p_mx=0.$

Hence, $x^*p_m=0$ for every $m\geq1.$ Thus, $p_m\leq \mathbf{1} - r(x^*)$ for every $m\geq1.$ Since $p_m\uparrow{\bf 1},$ it follows that $r(x^*)=0$ and, therefore, $x=0.$
\end{proof}

\begin{lem}\label{pervoe} Let $\mathcal{M}$ be a von Neumann algebra and let $a,b\in LS(\mathcal{M}).$ If $a$ is normal and if $ab=ba,$ then $eb=be$ for every spectral projection $e$ of the operator $a.$
\end{lem}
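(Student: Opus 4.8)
The plan is to first feed the hypothesis into the Fuglede--Putnam theorem already proved, and then to reduce the claim to a statement about commutation with \emph{bounded} operators, which I will settle by truncating the possibly unbounded operator $b$. First I would apply Theorem \ref{t3} to the triple $(a,a,b)$: since $a$ is normal and $ab=ba$, the theorem gives $a^*b=ba^*$. Taking adjoints in $ab=ba$ and in $a^*b=ba^*$ shows that $b^*$ too commutes with both $a$ and $a^*$. Hence both $b$ and $b^*$ commute with the self-adjoint, mutually commuting locally measurable operators $h_1=\tfrac12(a+a^*)$ and $h_2=\tfrac1{2i}(a-a^*)$, and $a=h_1+ih_2$ is the decomposition of the normal operator $a$ into commuting self-adjoint parts. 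A routine resolvent manipulation (multiply $(h_j\mp i)b=b(h_j\mp i)$ on both sides by the bounded inverses) then shows that $b$ and $b^*$ commute with the four bounded operators
\[ \mathcal F=\{R_1,R_1^*,R_2,R_2^*\},\qquad R_j=(h_j-i)^{-1}\in\mathcal M. \]
Since the von Neumann algebra $N:=\mathcal F''=W^*(h_1,h_2)$ contains every spectral projection $e=E_a(\Delta)$ of $a$ (via the joint spectral measure of $h_1,h_2$), the whole problem reduces to showing that $b$ commutes with every element of $N$.

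The heart of the matter, and the step I expect to be the main obstacle, is precisely this passage from \lq\lq $b$ commutes with the bounded generators $\mathcal F$\rq\rq\ to \lq\lq $b$ commutes with the generated algebra $N$\rq\rq: because $b$ is unbounded, the usual double-commutant/strong-limit argument (indicator functions as strong limits of continuous functions) is not directly available, since multiplication by the unbounded factor $b$ is not continuous for the strong topology. I would resolve this by truncating $b$ with its own spectral projections. Put $T=b^*b\in LS_+(\mathcal M)$; since $b$ and $b^*$ commute with every $y\in\mathcal F$, one gets $Ty=yT$, and then the bounded resolvent $S=(T+\mathbf 1)^{-1}\in\mathcal M$ also commutes with $\mathcal F$. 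Therefore the spectral projections $g_l:=e_{[0,l]}(|b|)$, being spectral projections of $S$, lie in $W^*(S)\subseteq\mathcal F'$; in particular each $g_l$ commutes with $\mathcal F$, hence with all of $N$, and $g_l\uparrow\mathbf 1$. Now $b_l:=bg_l\in\mathcal M$ is bounded and, since both $b$ and $g_l$ commute with $\mathcal F$, so does $b_l$; thus $b_l\in\mathcal F'$ commutes with every $e\in N$ by the classical bounded commutation theorem.

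Finally, for a fixed spectral projection $e\in N$ I would combine these facts: using $g_le=eg_l$ and $b_le=eb_l$,
\[ (be-eb)g_l=beg_l-ebg_l=b_le-eb_l=0\qquad\text{for every }l. \]
Since $g_l\uparrow\mathbf 1$, the relation $(be-eb)g_l=0$ forces $g_l\le n(be-eb)$ for all $l$, whence $r(be-eb)=0$ and $be=eb$, exactly as in the argument of Lemma \ref{l0}. The only genuine difficulty throughout is the unboundedness of $b$; the truncation $g_l$, which provably commutes with $\mathcal F$ through the resolvent $S$, is what legitimizes the reduction to the classical bounded commutation theorem, and, because $e$ is taken directly inside $N=\mathcal F''$, this simultaneously disposes of the usual monotone-class difficulty for arbitrary Borel sets $\Delta\subseteq\mathbb C$.
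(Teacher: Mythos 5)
Your proof is correct, but its second half takes a genuinely different route from the paper's, and the contrast is instructive. Both arguments begin identically: Theorem \ref{t3} upgrades $ab=ba$ to commutation of $b$ and $b^*$ with both $a$ and $a^*$, and a resolvent trick then produces bounded operators in $\mathcal{M}$. From there the paper regularizes $b$: it takes the real and imaginary parts $b_j$ of $b$, passes to $c_j=(b_j+i\mathbf{1})^{-1}\in\mathcal{M}$, checks $ac_j=c_ja$, and quotes Theorem 13.33 in \cite{R} (a \emph{bounded} operator commuting with an unbounded normal operator commutes with its spectral projections); the identity $eb_j=(b_j+i\mathbf{1})c_je(b_j+i\mathbf{1})-ie$ then recovers $eb_j=b_je$ in one line. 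You instead regularize $a$ (resolvents of its parts $h_1,h_2$), place the spectral projections of $a$ inside $N=\mathcal{F}''$, and then additionally truncate $b$ by its own spectral projections $g_l$, legitimized through $S=(b^*b+\mathbf{1})^{-1}$; this replaces the classical unbounded commutation theorem by the bare double-commutant relation plus a limiting argument as in Lemma \ref{l0}. What your route buys is independence from the Fuglede--Putnam--Rosenblum theorem for unbounded normal operators: only the spectral theorem and von Neumann's double commutant theorem are invoked. What it costs is length, plus one point that deserves explicit care: the claim $E_a(\Delta)\in W^*(h_1,h_2)$ requires $h_1,h_2$ to commute \emph{strongly} (commuting spectral measures), and this must be extracted from the classical spectral theorem applied to the normal operator $a$ itself --- i.e., from the identification of $h_1,h_2$, defined as strong sums in $LS(\mathcal{M})$, with $\int\Re z\,dE_a$ and $\int\Im z\,dE_a$ --- and not from the algebraic relation $h_1h_2=h_2h_1$ in $LS(\mathcal{M})$: deriving strong commutation from algebraic commutation is precisely the content of Corollary \ref{t5}, so that reading would be circular. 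With the spectral-theorem reading made explicit, your argument is sound.
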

\begin{proof} Let $b_1=\Re(b)=\frac{b+b^*}{2}$ and $b_2=\Im(b)=\frac {b-b^*}{2i}.$ By Theorem \ref{t3} we have that $ab^*=b^*a.$ Thus $ab_j=b_ja,$ $j=1,2.$ Let a Borel function $\phi$ be given by the formula $\phi(t) = (t+i)^{-1},$ $t\in\mathbb{R},$ and let $c_j=\phi(b_j),$ $j=1,2.$ Since $b_j^*=b_j$ and since $|\phi(t)|\leq 1,$ $t\in\mathbb{R},$ it follows from Spectral Theorem that $c_j\in\mathcal{M},$ $j=1,2.$ Since $ab_j=b_ja,$ it follows that
$$a(b_j+i\mathbf{1})^{-1}-(b_j+i\mathbf{1})^{-1}a=(b_j+i\mathbf{1})^{-1}\cdot((b_j+i\mathbf{1})a-a(b_j+i\mathbf{1}))\cdot(b_j+i\mathbf{1})^{-1}=0,$$
that is, $ac_j=c_ja.$ Theorem 13.33 in \cite{R} yields that $ec_j=c_je,$ $j=1,2,$ for every spectral projection $e$ of the operator $a.$ Thus, $eb_j=(b_j+i\mathbf{1})c_je(b_j+i\mathbf{1})-ie=(b_j+i\mathbf{1})ec_j(b_j+i\mathbf{1})-ie=b_je,$ $j=1,2.$ Summing these equalities, we obtain $eb=be.$
\end{proof}

\begin{proof}[Proof of Corollary \ref{t5}] $\eqref{cora}\Rightarrow\eqref{corb}.$ Lemma \ref{pervoe} states that $eb=be$ for every spectral projection $e$ of the operator $a.$ Again applying Lemma \ref{pervoe} to the couple $(b,e),$ we obtain that $ef=fe$ for every spectral projection $e$ of the operator $a$ and for every spectral projection $f$ of the operator $b.$

$\eqref{corb}\Rightarrow\eqref{corc}.$ Let $q_n$ (respectively, $r_n$) be the spectral projection for $a$ (respectively, $b$) corresponding to the set $D_n=\{z: |z|\leq n\},$ $n\in\mathbb{N}.$ Denote $\phi_n=\phi\cdot\chi_{D_n}$ and $\psi_n=\psi\cdot\chi_{D_n}.$ By Spectral Theorem, we have
$$q_n\cdot\phi(a)=\phi(a)\cdot q_n=\phi_n(aq_n),\quad r_m\cdot\psi(b)=\psi(b)\cdot r_m=\psi_m(br_m).$$
Bounded operators $aq_n$ and $br_m$ are normal and their spectral projections commute. By the Spectral Theorem for bounded operators, these operators commute and, therefore,
$$\phi_n(aq_n)\cdot\psi_m(br_m)=\psi_m(br_m)\cdot\phi_n(aq_n).$$
Thus,
$$q_nr_m\cdot \phi(a)\psi(b)\cdot q_nr_m=q_nr_n\cdot \phi_n(aq_n)\psi_m(br_m)\cdot q_nr_m=$$
$$=q_nr_m\cdot\psi_m(br_m)\phi_n(aq_n)\cdot q_nr_m=q_nr_m\cdot\psi(b)\phi(a)\cdot q_nr_m.$$
Taking into account that $r_m\uparrow{\bf 1}$ and using Lemma \ref{l0}, we obtain
$$q_n\cdot \phi(a)\psi(b)\cdot q_n=q_n\cdot\psi(b)\phi(a)\cdot q_n.$$
Again appealing to Lemma \ref{l0}, we obtain \eqref{corc}.

Taking $\phi(z)=z$ and $\psi(z)=z$ in \eqref{corc}, we obtain the implication $\eqref{corc}\Rightarrow\eqref{cora}.$
\end{proof}

\end{document}